\newcommand{\Rni}{\mathbb{R}^{n_{i}}}
\newcommand{\nfR}{\mathbb{R}}
\newcommand{\xpi}{x_i}
\newcommand{\xmi}{x_{-i}}
\newcommand{\umi}{u_{-i}}
\newcommand{\mbF}{\mbox{F}}
\newcommand{\fpi}{f_{i}}
\newcommand{\lvt}{\left[}
\newcommand{\rvt}{\right]}
\newcommand{\idl}{\mbox{Ideal}}
\newcommand{\qmod}{\mbox{Qmod}}
\newcommand{\ddd}{,\ldots,}
\DeclareMathOperator{\Rank}{rank}
\newcommand{\re}{\mathbb{R}}
\newcommand{\N}{\mathbb{N}}
\def\af{\alpha}
\newcommand{\st}{\mathit{s.t.}}
\newcommand{\reff}[1]{(\ref{#1})}
\newcommand{\lmd}{\lambda}
\newcommand{\pt}{\partial}
\newcommand{\dt}{\delta}
\newcommand{\nn}{\nonumber}
\newcommand{\mc}[1]{\mathcal{#1}}
\def\rank{\mbox{rank}}
\newcommand{\bdes}{\begin{description}}
\newcommand{\edes}{\end{description}}
\newcommand{\bal}{\begin{align}}
\newcommand{\eal}{\end{align}}
\newcommand{\bnum}{\begin{enumerate}}
\newcommand{\enum}{\end{enumerate}}
\newcommand{\bit}{\begin{itemize}}
\newcommand{\eit}{\end{itemize}}
\newcommand{\bea}{\begin{eqnarray}}
\newcommand{\eea}{\end{eqnarray}}
\newcommand{\be}{\begin{equation}}
\newcommand{\ee}{\end{equation}}
\newcommand{\baray}{\begin{array}}
\newcommand{\earay}{\end{array}}
\newcommand{\bsry}{\begin{subarray}}
\newcommand{\esry}{\end{subarray}}
\newcommand{\bca}{\begin{cases}}
\newcommand{\eca}{\end{cases}}
\newcommand{\bcen}{\begin{center}}
\newcommand{\ecen}{\end{center}}
\newcommand{\bbm}{\begin{bmatrix}}
\newcommand{\ebm}{\end{bmatrix}}
\newcommand{\btab}{\begin{tabular}}
\newcommand{\etab}{\end{tabular}}
\theoremstyle{plain}
\newtheorem{definition}{Definition}[section]
\newtheorem{theorem}[definition]{Theorem}
\newtheorem{assumption}[definition]{Assumption}
\newtheorem{example}[definition]{Example}
\newtheorem{alg}[definition]{Algorithm}
\theoremstyle{remark}
\numberwithin{equation}{section}
\begin{document}

\title[Rational GNEPs]
{Rational Generalized Nash Equilibrium Problems}

\author[Jiawang Nie]{Jiawang~Nie}
\author[Xindong Tang]{Xindong~Tang}
\author[Suhan Zhong]{Suhan~Zhong}
\address{Jiawang Nie, Department of Mathematics,
University of California San Diego,
9500 Gilman Drive, La Jolla, CA, USA, 92093.}
\email{njw@math.ucsd.edu}

\address{Xindong Tang, Department of Applied Mathematics,
The Hong Kong Polytechnic University,
Hung Hom, Kowloon, Hong Kong.
}
\email{xindong.tang@polyu.edu.hk}

\address{Suhan Zhong, Department of Mathematics,
Texas A\&M University, College Station, TX, USA, 77843.}
\email{suzhong@tamu.edu}

\subjclass[2010]{90C23, 90C33, 91A10, 65K05}
\keywords{generalized Nash equilibrium,
rational function, feasible extension,
Lagrange multiplier expression, Moment-SOS relaxation}
\date{}

\begin{abstract}
This paper studies generalized Nash equilibrium problems
that are given by rational functions.
The optimization problems are not assumed to be convex.
Rational expressions for Lagrange multipliers
and feasible extensions of KKT points
are introduced to compute a generalized Nash equilibrium (GNE).
We give a hierarchy of rational optimization problems to
solve rational generalized Nash equilibrium problems.
The existence and computation of feasible extensions are studied.
The Moment-SOS relaxations are applied to solve
the rational optimization problems.
Under some general assumptions, we show that the proposed hierarchy
can compute a GNE if it exists or detect its nonexistence.
Numerical experiments are given to show the efficiency of the proposed method.
\end{abstract}

\maketitle

\section{Introduction}

The generalized Nash equilibrium problem (GNEP) is a kind of game to find strategies
for a group of players such that each player's objective cannot be further optimized,
for given strategies of other players.
Suppose there are $N$ players and the $i$th player's strategy
is the real vector $x_i \in \re^{n_i}$. We write that
\[
x_i \coloneqq (x_{i,1},\ldots,x_{i,n_i}),\quad
x  \coloneqq  (x_1,\ldots,x_N).
\]
Let $n  \coloneqq  n_1+ {\cdots} + n_N$.
When the $i$th player's strategy $x_i$ is focused,
we also write that $x=(x_i,x_{-i})$, where
\[
x_{-i}\, \coloneqq \, (x_1,\ldots,x_{i-1},x_{i+1},\ldots,x_N) .
\]
A strategy tuple $u \coloneqq (u_1, \ldots, u_N)$
is said to be a generalized Nash equilibrium (GNE) if
each $u_i$ is the optimizer for the $i$th player's optimization
\be
\label{eq:GNEP}
\mbox{F}_i(u_{-i}): \,
\left\{ \begin{array}{cl}
\min\limits_{\xpi\in \Rni}  &  \fpi(x_i,u_{-i}) \\
\st & x_i\in X_i(\umi).
\end{array} \right.
\ee
In the above, the $X_i(\umi)$ is the feasible set
and $\fpi(x_i,u_{-i})$ is the $i$th player's objective.
They are parameterized by
$u_{-i} =(u_1,\ldots,u_{i-1},u_{i+1},\ldots,u_N)$.
Each player's optimization is parameterized by the strategies of other players.
We denote by $\mathcal{S}$ the set of all GNEs and denote by
$\mathcal{S}_i(u_{-i})$ the set of minimizers
for the optimization $\mbox{F}_i(u_{-i})$.
The entire feasible strategy set is
\be  \label{df:setX}
X \coloneqq
\left\{  (x_1,\ldots,x_N)
\left| \,
x_i \in X_i(\xmi)
\right. , i = 1, \ldots, N
\right \} .
\ee
A strategy tuple $x = (x_1,\ldots,x_N)$
is said to be feasible if each $x_i \in X_i(\xmi)$.

This paper studies {\it rational generalized Nash equilibrium problems}
(rGNEPs), i.e., all the objectives
and constraining functions are rational functions in $x$.
We assume the $i$th player's feasible set is given as
\be \label{eq:feaset}
X_i(\xmi) \, = \,
\left\{x_i \in \Rni \left|\begin{array}{l}
g_{i,j}(\xpi,\xmi) = 0 \, (j\in \mc{I}^{(i)}_0),\\
g_{i,j}(\xpi,\xmi) \geq 0 \, (j\in \mc{I}^{(i)}_1), \\
g_{i,j}(\xpi,\xmi) > 0 \, (j\in \mc{I}^{(i)}_2)
\end{array}\right.\right\},
\ee
where $\mc{I}^{(i)}_0,\mc{I}^{(i)}_1,\mc{I}^{(i)}_2$ are respectively the
labeling sets (possibly empty) for equality, weak inequality
and strict inequality constraints.
For the rational function to be well defined,
we assume all denominators are positive in the feasible set.
If this is not the case, we can add
strict inequality constraints for denominators.
Rational functions frequently appear in GNEPs.
When defining functions are polynomials,
the GNEPs are studied in the recent work
\cite{Nie2020gs,Nie2020nash,Nie2021convex}.
For convenience, rational functions are also called
rational polynomials throughout the paper.

A special case of GNEPs is the \textit{Nash equilibrium problems} (NEPs):
each feasible set $X_i(x_{-i})$ is independent of $x_{-i}$.
When NEPs are defined by polynomials,
a method is given in \cite{Nie2020nash} to solve them.
For GNEPs given by convex polynomials,
it is studied how to solve them in the recent work \cite{Nie2021convex}.
We refer to
\cite{dreves2011solution,facchinei2009generalized,
Facchinei2010,Facchinei2010book,Heusinger2012} for related work.

One may reformulate rGNEPs equivalently as polynomial GNEPs
by introducing new variables or changing the description of the feasible set.
However, doing so may lose some useful properties.
For instance, the convexity may be lost if
we use polynomial reformulations.
The following is such an example.

\begin{example}\rm
\label{ex:rat2poly}
Consider the $2$-player rGNEP
\be \label{eq:firsteprtn}
\begin{array}{cllcl}
    \min\limits_{x_{1} \in \re^2 }& \frac{2(x_{1,1})^2+(x_{1,2})^2+x_{1,1}x_{1,2}\cdot e^Tx_2}{x_{1,1}}&\vline&
    \min\limits_{x_{2} \in \re^2 }& \frac{2(x_{2,1})^2+(x_{2,2})^2-x_{2,1}x_{2,2} \cdot e^Tx_1}{x_{2,1}} \\
    \st & x_{1,1}-\frac{x_{2,1}}{x_{1,2}}\ge0 , &\vline&
     \st &1-e^T(x_2-x_1)\ge 0, \\
    &x_{1,1} > 0,\, x_{1,2} > 0, &\vline& & x_{2,1}-1 \ge 0, \, x_{2,2} - 1 \ge 0 .
\end{array}
\ee
In the above, $e =[ 1 \,\, 1]^{T}$.
In the domain $(x_1,x_2)>0$,
each player's optimization is convex in its strategy variable.
We can equivalently express this GNEP as polynomial optimization
\be\label{eq:polyreform}
\begin{array}{cllcl}
\min\limits_{x_{1} \in \re^3}& \substack{ x_{1,3}
   [2(x_{1,1})^2+(x_{1,2})^2+x_{1,1}x_{1,2}\cdot \hat{e}^T x_2]}     &\vline&
\min\limits_{x_{2} \in \re^3}& \substack{x_{2,3}
     [2(x_{2,1})^2+(x_{2,2})^2-x_{2,1}x_{2,2}\cdot \hat{e}^T x_1]} \\
\st & x_{1,1}x_{1,2}-x_{2,1}\ge0 , &\vline&
 \st &1-\hat{e}^T(x_2-x_1)\ge 0, \\
&x_{1,1} > 0,\, x_{1,2} > 0, &\vline& & x_{2,1}-1 \ge 0, \, x_{2,2} - 1 \ge 0 ,\\
& x_{1,1}x_{1,3} = 1, &\vline& & x_{2,1}x_{2,3} = 1,
\end{array}
\ee
where $\hat{e}=[ 1 \,\, 1 \,\, 0]^{T}$.
However, the above two optimization problems are not convex.
\end{example}

The GNEPs were originally introduced to model economic problems.
They are now widely used in various fields,
such as transportation, telecommunications, and machine learning.
We refer to \cite{ardagna2017generalized,chen2020oil,
Contreras2004,Kesselman2005,Liu2016,Pang2008}
for recent applications of GNEPs.
It is typically difficult to solve GNEPs.
The major challenge is due to interactions
among different players' strategies on the objectives and feasible sets.
The set of GNEs may be nonconvex,
even for convex NEPs (see \cite{Nie2020nash}).
Convex GNEPs can be reformulated as variational inequality (VI)
or quasi-variational inequality (QVI) problems
\cite{Facchinei2010generalized,nabetani2011parametrized,Pang2005}.
A semidefinite relaxation method for convex GNEPs
of polynomials is given in \cite{Nie2021convex}.
The penalty functions are used to solve GNEPs in \cite{Ba2020,FacKan10}.
An Augmented-Lagrangian method is given in \cite{kanzow2016}.
The Nikaido-Isoda function related methods are given in
\cite{dreves2012nonsmooth,vonHeusinger2009-2}.
Newton type methods are given in
\cite{facchinei2009generalized,Heusinger2012}.
An interior point method is given in \cite{dreves2011solution}.
Gauss-Seidel type methods are studied in \cite{Facchinei2011,Nie2020gs}.
Lemke's method is used to solve affine GNEPs \cite{Schiro2013}.
An ADMM-type method for solving GNEPs in Hilbert spaces is given in \cite{Borgens2021}.
Moreover, quasi-NEs for nonconvex GNEPs are studied in \cite{Cui2021book,Pang2011}.
We refer to \cite{Facchinei2010,Facchinei2010book,fischer2014generalized}
for surveys on GNEPs.

\subsection*{Contributions}

We study generalized Nash equilibrium problems
that are given by rational functions.
This is motivated by earlier work on polynomial NEPs \cite{Nie2020nash}
and convex GNEPs \cite{Nie2021convex}.
In various applications, people often face GNEPs given by rational functions.
Even for polynomial GNEPs, the Lagrange multiplier expressions
are usually given by rational functions instead of polynomial ones.
This was observed in \cite{Nie2021convex}.
Mathematically, rGNEPs can be equivalently formulated as polynomial GNEPs
by introducing new variables. However, such a reformulation usually destroys
some nice properties (e.g., convexity may be lost; see in Example~\ref{ex:rat2poly}).
Moreover, solving the reformulated polynomial GNEPs
is usually more computationally expensive.
This can be observed in numerical experiments.

For convex GNEPs, each feasible KKT point is a GNE. For nonconvex GNEPs,
a KKT point is typically not a GNE (see Example~\ref{eq:simpsimplex}).
When we solve nonconvex GNEPs,
the earlier existing methods may not get a GNE,
or are not able to detect its nonexistence.
There exists relatively little work for solving nonconvex GNEPs.
In this paper, we propose a new approach for solving rGNEPs.
The optimization problems are not assumed to be convex.
Our new approach is based on a hierarchy of rational optimization problems.
Our major contributions are:

\begin{itemize}

\item First, we introduce rational expressions for Lagrange multipliers
of each player's optimization.
These expressions can be used to give new constraints for GNEs.

\item Second, we introduce the new concept of
feasible extensions for some KKT points.
More specifically, for a KKT point that is not a GNE,
we extend it to the image of a rational function,
such that the image is feasible on the KKT set.
The feasible extension can be used to preclude KKT points that are not GNEs.
For nonconvex rGNEPs, the usage of rational feasible extensions
is important for computing a GNE (if it exists)
or for detecting its nonexistence.

\item Third, the Moment-SOS relaxations are used
to solve rational optimization problems that are obtained from
using Lagrange multiplier expressions and feasible extensions of some KKT points.
Unlike polynomial optimization, a rational optimization problem
may have strict inequalities.
We study the properties of Moment-SOS relaxations for solving them.

\end{itemize}

The paper is organized as follows.
Some preliminaries for moment and polynomial optimization
are given in Section~\ref{sc:pre}.
A hierarchy of rational optimization problems for solving
the GNEP is proposed in Section~\ref{sc:alg}.
Feasible extensions of KKT points are studied in Section~\ref{sc:RP}.
We show how to solve rational optimization problems in Section~\ref{sc:ROP}.
Some numerical experiments are given in Section~\ref{sc:ne}.
Some conclusions and discussions are given in Section~\ref{sc:conc}.

\section{Preliminaries}
\label{sc:pre}

\noindent {\bf Notation}
The symbol $\mathbb N$ denotes the set of
nonnegative integers. The symbol $\mathbb R$
denotes the set of real numbers.
For a positive integer $k$, denote the set $[k]  \coloneqq  \{1, \ldots, k\}$.
For a real number $t$, $\lceil t \rceil$
denotes the smallest integer not smaller than $t$.
We use $e_i$ to denote the vector such that the $i$th entry is
$1$ and all others are zeros, use $e$
to denote the vector of all ones.
For a vector $u$ in the Euclidean space,
its Euclidean norm is denoted as $\| u \|$.
By writing $A\succeq0$ (resp., $A\succ0$), we mean that the matrix $A$
is symmetric positive semidefinite (resp., positive definite).
Let $\nfR[x]$ denote the ring of real polynomials in $x$ and
$\nfR[x]_d$ denotes the set of polynomials with degrees not bigger than $d$.
For the $i$th player's strategy vector $x_i$,
the notation $\re[x_i]$ and $\re[x_i]_d$
are defined similarly. For a polynomial $p\in\mathbb{R}[x]$,
we write $p=0$ to mean that $p$ is the identically zero polynomial,
and $p \ne 0$  means that $p$ is not identically zero.
The total degree of $p$ is denoted by $\deg(p)$
and its partial degree on $x_i$ is denoted by $\deg_{x_i}(p)$.
For a function $f(x)$, the notation
$\nabla_{x_i}f:=(\frac{\partial f}{\partial x_{i,j}})_{j\in[n_i]}$
denotes its gradient with respect to $x_i$.
For a set $X$, we use $cl(X)$ to denote its closure in the Euclidean topology.
A property is said to hold {\it generically}
if it holds for all points in the space of input data
except a set of Lebesgue measure zero.

Let $z=(z_1,\ldots, z_l)$ stand for the vector $x$ or $x_i$.
For a power $\af  \coloneqq  (\af_1, \ldots, \af_l) \in \N^{l}$, we denote that
$z^\alpha  \coloneqq  z_1^{\alpha_1} \cdots z_l^{\alpha_l}$
and $|\alpha| \coloneqq \alpha_1+{\cdots}+\alpha_l.$
For a degree $d >0$, denote the power set
$
\N_d^l  \coloneqq
\{\alpha\in {\mathbb{N}}^l:  \ |\alpha| \le d \}.
$
We use $[z]_d$ to denote the vector of all monomials in $z$
whose degrees are at most $d$, ordered in the graded alphabetical ordering, i.e.,
$
[z]_d \coloneqq [1,\,  z_1,\, \ldots,\, z_l,\, z_1^2,\,\ldots,\, z_l^d ]^T.
$

\subsection{Ideals and quadratic modules}
\label{ssc:poly}

For a polynomial $p \in\mathbb{R}[x]$ and subsets $I, J \subseteq \mathbb{R}[x]$,
define the product and Minkowski sum
\[
p \cdot I   \coloneqq \{ p  q: \, q \in I \}, \quad
I+J   \coloneqq  \{a+b: \, a \in I, b \in J  \}.
\]
The subset $I$ is an ideal if $p \cdot I \subseteq I$
for all $p\in\mathbb{R}[x]$ and $I+I \subseteq I$.
The ideal generated by a polynomial tuple $h = (h_1, \ldots, h_{m_1})$ is
$
\idl[h] \coloneqq  h_1\cdot\mathbb{R}[x] + {\cdots} + h_{m_1} \cdot \mathbb{R}[x].
$
For a degree $d$, the $d$th truncation of $\idl[h]$ is
\[
\idl[h]_{d} \coloneqq h_1\cdot\mathbb{R}[x]_{d-\deg(h_1)}+\cdots+
h_{m_1} \cdot\mathbb{R}[x]_{d-\deg(h_{m_1})}.
\]

A polynomial $\sigma \in \re[x]$ is said to be a sum-of-squares (SOS)
if $\sigma = p_1^2+{\cdots}+p_k^2$ for some $p_i \in\nfR[x]$.
We use $\Sigma[x]$ to denote the set of all SOS polynomials in $x$
and denote the truncation
$
\Sigma[x]_d  \coloneqq  \Sigma[x] \cap \nfR[x]_d.
$
The quadratic module of a polynomial tuple $g=(g_1,\ldots,g_{m_2})$ is
$
\qmod[g]  \coloneqq  \Sigma[x] +  g_1 \cdot \Sigma[x] +
{\cdots} + g_{m_2} \cdot  \Sigma[x].
$
Similarly, the degree-$d$ truncation of $\qmod[g]$ is
\[
\qmod[g]_{d} \,  \coloneqq  \, \Sigma[x]_{d} + g_1\cdot \Sigma[x]_{d-\deg(g_1)}
+{\cdots}+g_{m_2}\cdot\Sigma[x]_{d-\deg(g_{m_2})}.
\]
The polynomial tuples $h,g$ determine the basic closed semi-algebraic set
\begin{equation}
  \label{polyrep}
T\,  \coloneqq  \,  \{x \in \nfR^n:
h_i(x) =  0 \, (i\in [m_1]),
g_j(x) \ge  0 \, (j \in [m_2])
\}.
\end{equation}
Clearly, every polynomial in $\idl[h] + \qmod[g]$
is nonnegative on the set $T$. We denote by
$\mathscr{P}(T)$ the set of polynomials nonnegative on $T$
and denote the  truncation
$\mathscr{P}_d(T)  \coloneqq \mathscr{P}(T)\cap\mathbb{R}[x]_d$.
Clearly, $\idl[h]+ \qmod[g] \subseteq\mathscr{P}(T)$.
The sets $\mathscr{P}(T)$, $\mathscr{P}_d(T)$ are convex cones,
and $\mathscr{P}_d(T)$ is the dual cone of the moment cone
\[
\mathscr{R}_d(T)  \coloneqq  \left\{
\sum_{i=1}^M \lmd_i [u_i]_{d}:\,  u_i \in T, \lmd_i \ge 0, M \in \N
\right\} .
\]
When $T$ is compact, the cone $\mathscr{R}_d(T)$
is closed and it equals the dual cone of $\mathscr{P}_d(T)$.

The set $\idl[h]+\qmod[g]$ is said to be {\it archimedean}
if there exists $p \in \idl[h]+\qmod[g]$ such that the inequality
$p(x) \ge 0$ defines a compact set.
If $\idl[h]+\qmod[g]$ is archimedean, then $T$ is compact.
Conversely, if $T$ is compact, say,
$T$ is contained in the ball $\|x\|^2 \le R$,
then $\idl[h]+\qmod[g,R -\|z\|^2]$ is archimedean.
When $\idl[h]+\qmod[g]$ is archimedean,
if a polynomial $p > 0$ on $T$, then $p \in \idl[h]+\qmod[g]$.
This conclusion is referenced as
Putinar's Positivstellensatz \cite{putinar1993positive}.

\subsection{Localizing and moment matrices}
\label{ssc:locmat}

For an integer $k\ge 0$, a real vector
$y=(y_{\alpha})_{\alpha\in\mathbb{N}_{2k}^n}$ is said to be a
{\it truncated multi-sequence} (tms) of degree $2k$.
For a polynomial $f = \sum_{ \af \in \N^n_{2k} } f_\af x^\af$,
define the operation
\be \label{<f,y>}
\langle f, y \rangle \,  \coloneqq  \,
\sum_{ \af \in \N^n_{2k} } f_\af y_\af.
\ee
The operation $\langle f, y \rangle$ is bilinear in $f$ and $y$.
For a polynomial $q \in \re[x]_{2t}$ ($t \le k$) and a degree
$s\le k - \lceil \deg(q)/2 \rceil$,
the $k$th order {\it localizing matrix} of $q$ for $y$
is the symmetric matrix $L_{q}^{(k)}[y]$ such that
(the $vec(a)$ denotes the coefficient vector of $a$)
\be \label{df:Lf[y]}
\langle qa^2, y \rangle  \, =  \,
vec(a)^T \big( L_{q}^{(k)}[y] \big) vec(a)
\ee
for all $a \in \re[x]_s$.
When $q=1$ (the constant one polynomial),
the localizing matrix $L_{q}^{(k)}[y]$
becomes the $k$th order \textit{moment matrix}
$M_k[y]  \coloneqq  L_{1}^{(k)}[y]$.

Localizing and moment matrices can be used to approximate
the moment cone $\mathscr{R}_d(T)$ by semidefinite programming relaxations.
They are useful for solving polynomial, matrix and tensor optimization
\cite{HilNie08,PMI2011,Nie2012,NieZhang18}.
We refer to \cite{Las01,Las15,Lau09}
for a general introduction to polynomial optimization and moment problems.

\subsection{Lagrange multiplier expressions}
\label{sc:LME}

The Karush-Kuhn-Tucker (KKT)
conditions are useful for solving GNEPs and NEPs.
We review optimality conditions for nonlinear optimization (see \cite{Brks}).
Frequently used constraint qualifications are
the linear independence constraint qualification (LICQ) and
the Mangasarian-Fromovite constraint qualification (MFCQ).
For strict inequality constraints,
their associated Lagrange multipliers are zeros,
and hence the KKT conditions only concern weak inequality constraints.
For the convenience of description, we write that
$\mc{I}^{(i)}_0 \cup  \mc{I}^{(i)}_1 = \{1, \ldots, m_i \}$
and $g_i = (g_{i,1},\ldots,g_{i,m_i})$.
Under certain constraint qualifications,
if $x_i\in X_i(\xmi)$ is a minimizer of $\mbox{F}_i(\xmi)$,
then there exists a Lagrange multiplier vector
$\lambda_i \coloneqq (\lambda_{i,1},\ldots,\lambda_{i,m_i})$ such that
\be
\label{eq:KKTwithLM}
\left\{
\begin{array}{l}
\nabla_{x_i} f_i(x)-\sum_{j=1}^{m_i}\lambda_{i,j}
       \nabla_{x_i} g_{i,j}(x)=0, \\
   \lambda_i\perp g_i(x), \,
   \lambda_{i,j}\ge0 \, (j\in\mc{I}^{(i)}_1).
\end{array}
\right.
\ee
In the above, $\lambda_i \perp g_i(x)$ means that
$\lmd_i$ is perpendicular to $g_i(x)$.
The system (\ref{eq:KKTwithLM}) gives the first order
KKT conditions for $\mbox{F}_i(\xmi)$.
Such $(x_i,\lambda_i)$ is called a critical pair.
Under the constraint qualifications, every GNE
satisfies (\ref{eq:KKTwithLM}).

Consider the $i$th player's optimization problem $\mbox{F}_i(\xmi)$.
If there exists a rational vector function $\tau_i(x)$
such that $\lmd_i = \tau_i(x)$ for every critical pair
$(x_i,\lambda_i)$ of $\mbox{F}_i(\xmi)$,
then $\tau_i(x)$ is called a
\textit{rational Lagrange multiplier expression} (LME) for $\lmd_i$.
As in (\ref{eq:KKTwithLM}),
each critical pair $(x_i,\lambda_i)$ of the optimization $\mbox{F}_i(\xmi)$ satisfies
\be
\label{eq:Clmd=df}
\underbrace{\bbm
\nabla_{x_i} g_{i,1}(x) & \nabla_{x_i} g_{i,2}(x) &  \cdots &  \nabla_{x_i} g_{i,m_i}(x) \\
g_{i,1}(x) & 0  & \cdots & 0 \\
0  & g_{i,2}(x)  & \cdots & 0 \\
\vdots & \vdots & \ddots & \vdots \\
0  &  0  & \cdots & g_{i,m_i}(x)
\ebm}_{G_i(x) }
\underbrace{
\bbm  \lmd_{i,1} \\ \lmd_{i,2} \\ \vdots \\ \lmd_{i,m_i} \ebm
}_{\lmd_i}
=
\underbrace{\bbm  \nabla_{x_i}f_i(x)  \\ 0 \\ \vdots \\ 0 \ebm}_{ \hat{f_i}(x)} .
\ee
If there exist a matrix polynomial $T_i(x)$
and a nonzero scalar polynomial $q_i(x)$ such that
\[
T_i(x)G_i(x) \, = \, q_i(x)  I_{m_i},
\]
then (\ref{eq:Clmd=df}) implies that
$  q_i(x)\lambda_i=T_i(x)\hat{f}_i(x).$
This gives the rational LME:
\be
\label{eq:rtnlmd_i}
\tau_i(x) \, = \, T_i(x)\hat{f}_i(x) / q_i(x).
\ee
At a point $u$, if $q_i(u)=0$, then $T_i(u)\hat{f}_i(u)=0$.

The rational expression (\ref{eq:rtnlmd_i}) almost always exists.
This can be shown as follows. Let $H_i(x) \coloneqq G_i(x)^TG_i(x)$,
then $H_i(x)$ is a matrix of rational functions
and $H_i(x) \succeq 0$ on $X$.
If the determinant $\det H_i(x)$ is not identically zero (this is the general case),
then we have
\[
\mbox{adj}\, H_i(x) \cdot  H_i(x) \,\, = \,\, \det H_i(x) \cdot  I_{m_i},
\]
where $\mbox{adj}\, H_i(x)$ denotes the adjacent matrix of $H_i(x)$.
Let $d_i(x)$ be the denominator of $\det H_i(x)$, then
$T_i(x)G_i(x) =  q_i(x)\cdot I_{m_i}$
for the selection
\begin{equation}
\label{eq:genLME}
T_i(x) = d_i(x) \cdot \mbox{adj} \, H_i(x) \cdot  G_i(x)^T,\quad
q_i(x) = d_i(x)  \cdot \det H_i(x).
\end{equation}
The above choices of $T_i(x)$ and $q_i(x)$ may not be computationally efficient.
However, there often exist different options for
$T_i(x)$ and $q_i(x)$ to make \reff{eq:rtnlmd_i} hold.
For computational efficiency, we prefer that
$T_i(x)$ and $q_i(x)$ have low degrees.
It is worth noting that once their degrees are given,
the equation $T_i(x)G_i(x) = q_i(x)\cdot I_{m_i}$
is linear in the coefficients of $T_i(x)$ and $q_i(x)$.
So we can obtain $T_i(x), q_i(x)$ by solving linear equations.
The following is such an example.

\begin{example}\rm
Let $x = (x_1,x_2), x_1 \in \re^1, x_2 \in \re^1$ and $g_2(x) = (1-x_1-x_2,x_2)$.
We look for $T_2(x),\,q_2(x)$ such that $T_2(x)G_2(x) = q_2(x)\cdot I_2$, where
\[
G_2(x) = \left[\begin{array}{rr}
-1 & 1 \\1-x_1-x_2 & 0 \\0 & x_2
\end{array}\right].
\]
We consider $q_2(x)$ and $T_2(x)$ having degree $1$, i.e.,
\[
\begin{array}{l}
T_2(x) = (a_{i,j}+b_{i,j}x_1+c_{i,j}x_2)_{1\le i\le 2,1\le j\le 3},\\
q_2(x) = a_0+b_0x_1+c_0x_2.
\end{array}
\]
The equality $T_2(x)G_2(x) = q_2(x)\cdot I_2$ gives the equations
\[
\begin{array}{l}
a_{1,1} = b_{1,1} = b_{1,2} = c_{1,2} = b_{2,2} = c_{2,2} = b_{1,3} =
c_{1,3} = b_{2,3} = c_{2,3} = 0,\\
a_{0} = a_{2,1} = a_{1,2} = a_{2,2} = -b_{2,1} = -c_{2,1} = -b_0,\\
a_{1,3} = -c_{1,1},\, c_0 = -c_{1,1}-a_{1,2},\, a_{2,3} = c_0-c_{2,1}.
\end{array}
\]
We can choose $a_0 = 1$ and $c_{1,1} = -1$ to obtain
\[
T_2(x) = \left[\begin{array}{rrr}
-x_2 & 1 & 1\\ 1-x_1-x_2 & 1 & 1
\end{array}\right],\quad
q_2(x) = 1-x_1.
\]
We refer to \cite{Nie2019,Nie2021convex} for more details
about Lagrange multiplier expressions.
\end{example}

\section{A hierarchy of optimization problems}
\label{sc:alg}

In this section, we propose a new approach for solving rGNEPs.
It requires solving a hierarchy of rational optimization problems.
They are obtained from Lagrange multiplier expressions
and feasible extensions of KKT points that are not GNEs.
Under some general assumptions, we prove that this hierarchy
either returns a GNE or detects its nonexistence.

As shown in Subsection~\ref{sc:LME},
one can express Lagrange multipliers as rational functions on the KKT set.
Recall the set $X$ as in \reff{df:setX}.
For the $i$th player's optimization $\mbox{F}_i(x_{-i})$,
we suppose that there is a tuple
$\tau_i=(\tau_{i,j})_{j \in \mc{I}^{(i)}_0 \cup \mc{I}^{(i)}_1  }$
of rational functions in $x$,
with denominators positive on $X$, such that
\begin{equation}
\label{eq:as:ratexp}
\lambda_{i,j} \, = \,  \tau_{i,j} (x),
\quad  j \in \mc{I}^{(i)}_0 \cup \mc{I}^{(i)}_1 ,
\end{equation}
for each critical pair $(x_i,\lambda_i)$ of $\mbox{F}_i(x_{-i})$.
When $G_i(x)$ has full column rank on $X$,
there exist LMEs satisfying \reff{eq:as:ratexp},
by \cite[Proposition~3.6]{Nie2021convex}.
Note that the Lagrange multipliers are zero for strict inequality constraints.
So, the KKT set is
\be \label{eq:KKTweak}
\mathcal{K}  \coloneqq
\left\{
 x\in X\left|
\begin{array}{ccc}
\nabla_{x_i}f_i= \sum\limits_{j \in  \mc{I}^{(i)}_0 \cup  \mc{I}^{(i)}_1  }
    \tau_{i,j}(x)\nabla_{x_i} g_{i,j}(x), \, (i \in [N])\\
\tau_{i,j}(x)  g_{i, j}(x) = 0, \,
\tau_{i,j}(x)\ge 0, \,(i \in [N],\,j\in\mc{I}^{(i)}_1 )
\end{array}
\right.
 \right\}.
\ee

Not every point $u = (u_1, \ldots, u_N) \in \mc{K}$ is a GNE.
How do we preclude non-GNEs in $\mc{K}$?
We consider the case that $u$ is not a GNE. Then there exist
$i \in [N]$ and a point $v_i \in X_i(u_{-i})$ such that
\be \label{ineq:f(v_i,u)}
f_i(v_i,u_{-i})-f_i(u_i, u_{-i}) \,  < \,  0.
\ee
However, if $x \coloneqq (x_1, \ldots, x_N)$ is a GNE
and $v_i$ is also feasible for $\mbox{F}_i(x_{-i})$,
i.e., $v_i \in X_i(x_{-i})$,
then $x$ must satisfy the inequality
\be  \label{ineq:fv-fx}
f_i(v_i,x_{-i}) - f_i(x_i, x_{-i}) \ge 0.
\ee
That is, every GNE $x$ satisfies the constraint
\reff{ineq:fv-fx} if $v_i \in X_i(x_{-i})$.
This is used to solve NEPs in \cite{Nie2020nash}.
However, unlike NEPs, the feasible set of $X_i(x_{-i})$ depends on $x_{-i}$.
As a result, a point $v_i \in X_i(u_{-i})$ may not be feasible for $\mbox{F}_i(x_{-i})$,
i.e., it is possible that $v_i\not\in X_i(x_{-i})$ for a GNE $x$.
For such a case, the inequality (\ref{ineq:fv-fx}) may not hold for any GNEs.
In other words, it is possible that for every GNE $x^* = (x_i^*,x_{-i}^*)$,
it may happen that $v_i\not\in X_i(x^*_{-i})$ and
\[
f_i(v_i,x^*_{-i}) <  f_i(x_i^*, x_{-i}^*) =
\min\limits_{x_i\in X(x_{-i}^*)} f_i(x_i,x^*_{-i}) .
\]
The following is such an example.

\begin{example}\rm
\label{eq:simpsimplex}
Consider the $2$-player GNEP
\[
\begin{array}{cllcl}
    \min\limits_{x_{1} \in \re^2 }& (x_{1,1}-x_{1,2})x_{2,1}x_{2,2}-x_1^Tx_1 &\vline&
    \min\limits_{x_{2} \in \re^2 }& 3(x_{2,1}-x_{1,1})^2+2(x_{2,2}-x_{1,2})^2 \\
    \st & 1-e^Tx\ge 0,\,x_1\ge0 ,&\vline& \st & 2-e^Tx\ge 0,\,x_2\ge0.\\
\end{array}
\]
It has only two GNEs $x^*=(x_1^*,\,x_2^*)$:
\[
x_1^*=x_2^*=(0.5,0)\quad \text{and}\quad x_1^*=x_2^*=(0,0.5).
\]
Consider the point $u=(u_1,u_2) \in \mc{K}$, with $u_1=u_2=(0,0)$.
The $u_1$ is not a minimizer of $\mbox{F}_1(u_2)$,
so $u$ is not a GNE.
The optimizers of $\mbox{F}_1(u_2)$ are
$v_1=(1,0)$ and $(0,1)$.
One can check that for either GNE $x^*$, it holds that
\[
v_1 \not\in X_1( x_2^* ),  \qquad
f_1(v_1,x_2^*)-f_1(x^*_1, x^*_2)=-0.75<0.
\]
The inequality \reff{ineq:fv-fx} does not hold for any GNE.
\end{example}

The above example shows that the constraint \reff{ineq:fv-fx}
may not hold for any GNE.
However, if there is a function $p_i$ in $x$ such that
\be \label{eq:ratext}
v_i=p_{i}(u), \qquad p_{i}(x)\in X_i(\xmi)\quad
 \mbox{for all} \quad  x\in \mc{K},
\ee
then the following inequality
\be \label{ineq:f(p)-f(x)}
f_i(p_i(x),x_{-i})-f_i(x_i, x_{-i}) \, \ge \, 0
\ee
separates GNEs and non-GNEs. This is because
$f_i(x_i, x_{-i}) \le  f_i(p_{i}(x),x_{-i})$
for every GNE $x$, since $p_i(x) \in X_{i}(x_{-i})$.
This motivates us to make the following assumption.

\begin{assumption}\label{as:ratext}
For a given triple $(u,i,v_i)$,  with $u \in \mc{K}$,
$i\in [N]$ and $v_i \in \mc{S}_i(u_{-i})$,
there exists a rational vector-valued function $p_i$ in
$x \coloneqq (x_1, \ldots, x_N)$ such that \reff{eq:ratext} holds.
\end{assumption}

The function $p_{i}$ satisfying \reff{eq:ratext}
is called a {\it feasible extension} of $v_i$ at the point $u$.
Feasible extension is useful for solving bilevel optimization~\cite{Nie2020bilevel}.
In Section~\ref{sc:RP}, we will discuss the existence
and computation of such $p_{i}$.

\subsection{An algorithm for solving GNEPs}

Based on LMEs and feasible extensions,
we propose the following algorithm for solving GNEPs.

\begin{alg} \label{ag:KKTSDP} \rm
For the given GNEP of (\ref{eq:GNEP}), do the following:

\begin{itemize}

\item [Step~0]
Find the Lagrange multiplier expressions as in \reff{eq:as:ratexp}.
Let $\mathscr{U} \coloneqq \mathcal{K}$ and $k \coloneqq 0$.
Choose a generic positive definite matrix $\Theta$ of length $n+1$.

\item [Step~1]
Solve the following optimization
(note $[x]_1 = \bbm 1 & x^T \ebm^T$)
\be \label{eq:KKTwithpolyext}
\left\{ \baray{rl}
\min & [x]_1^T\Theta[x]_1  \\
    \st & x\in \mathscr{U} .
\earay \right.
\ee
If (\ref{eq:KKTwithpolyext}) is infeasible, output that
either (\ref{eq:GNEP}) has no GNEs or
there is no GNE in the set $\mc{K}$.
Otherwise, solve it for a minimizer
$u \coloneqq (u_1, \ldots, u_N)$, if it exists.

\item [Step~2]
For each $i=1, \dots, N$, solve the following optimization
\begin{equation}
\label{eq:checkopt:alg}
\left\{
\baray{rl}
\delta_i \coloneqq  \min  & f_i(x_i,u_{-i})-f_i(u_i, u_{-i})\\
\st\quad & x_i\in X_i(u_{-i})
\earay
\right.
\end{equation}
for a minimizer $v_i$. Denote the label set
\begin{equation}
\label{eq:Delta}
\mathcal{N} \coloneqq \{i\in[N]: \delta_i<0\}.
\end{equation}
If $\mathcal{N}=\emptyset$, then $u$ is a GNE and stop;
otherwise, go to Step~3.

\item [Step~3]
For every above triple $(u, i, v_i)$ with $i\in \mathcal{N}$,
find a rational feasible extension $p_{i}$ satisfying \reff{eq:ratext}.
Then update the set $\mathscr{U}$ as
\be  \label{def:mathscrK}
\mathscr{U} \coloneqq \mathscr{U} \cap
\big\{x\in\re^n: f_i(p_i(x),x_{-i}) - f_i(x_i, x_{-i}) \ge 0 \,
\forall \,i\in\mathcal{N} \big \} .
\ee
Then, let $k \coloneqq k+1$ and go to Step~1.
\end{itemize}
\end{alg}

In Step~0, we can let $\Theta \coloneqq R^T R$
for a generically generated square matrix $R$.
Then the objective $[x]_1^T\Theta[x]_1$ is generic, coercive
and strictly convex,
and so, the optimization problem~\reff{eq:KKTwithpolyext}
has a unique minimizer if it is feasible.
This gives computational convenience for solving rational optimization
with Moment-SOS relaxations (see Theorem~\ref{thm:ratforall}).
Note that Algorithm~\ref{ag:KKTSDP} is applicable for all choices of
$\Theta$ (e.g., $\Theta = I_{n+1}$).
But a generically selected positive definite $\Theta$
is usually preferable in computational practice.
The optimization problem \reff{eq:KKTwithpolyext}
may have constraints given by rational polynomials
or it may have strict inequality constraints.
The optimization~\reff{eq:checkopt:alg}
may have both rational objective and rational constraints.
They can be solved by Moment-SOS relaxations.
The optimization problem~\reff{eq:checkopt:alg} has
a nonempty feasible set since $u_i\in X_i(u_{-i})$.
In applications, people usually assume (\ref{eq:checkopt:alg}) has a minimizer.
For instance, this is the case if its feasible set is compact
or if its objective is coercive.
We discuss how to solve the appearing rational
optimization problems in Section~\ref{sc:ROP}.

If a GNE is a KKT point, i.e.,
it belongs to the set $\mc{K}$ as in \reff{eq:KKTweak},
then it belongs to the set $\mathscr{U}$ in every loop.
In other words, the update of $\mathscr{U}$ in Algorithm~\ref{ag:KKTSDP}
does not preclude any GNEs.
The set $\mathscr{U}$ stays nonempty
if there is a GNE lying in $\mc{K}$.

In Algorithm~\ref{ag:KKTSDP}, we need LMEs and feasible extensions.
As shown in Subsection~\ref{sc:LME}, LMEs almost always exist.
For standard constraints like box, simplex or balls,
explicit LMEs are given in \reff{eq:boxLME}-\reff{eq:ballLME}.
When denominators of LMEs vanish at some points,
Algorithm~\ref{ag:KKTSDP} is still applicable,
because denominators can be cancelled by
multiplying their least common multiples.
We refer to Example~\ref{ep:jointsimp} for such cases.
The existence of a feasible extension is ensured if $\mc{K}$
is a finite set (see Theorem~\ref{thm:finitK}).
There exist explicit expressions for many common constraints;
see Subsection~\ref{ssc:commonFE}.
In summary, Algorithm~\ref{ag:KKTSDP}
can be used for solving many rGNEPs.

\subsection{Convergence analysis}
\label{ssc:cvg}

We now study the convergence of Algorithm~\ref{ag:KKTSDP}.

First, an interesting case is the convex rGNEP.
A GNEP is said to be convex if
every player's optimization problem is convex: for each fixed $x_{-i}$,
the objective $f_i(x_i,x_{-i})$ is convex in $x_i$,
the inequality constraining functions in \reff{eq:feaset} are concave in $x_i$
and all equality constraining functions are linear in $x_i$.
Interestingly, the concavity of constraining functions
can be weakened to the convexity of feasible sets
under certain assumptions.
As in \cite{Las10CovRep}, for given $x_{-i}$,
the feasible set $X_i(x_{-i})$ is said to be {\it nondegenerate}
if for every $j \in \mc{I}^{(i)}_0 \cup \mc{I}^{(i)}_1$,
the gradient $\nabla_{x_i} g_{i,j}(x)\ne 0$
for all $x_i \in X_i(x_{-i})$ such that $g_{i,j}(x)=0$.
The set $X_i(x_{-i})$ is said to satisfy {\it Slater's condition}
if it contains a point that makes all inequalities strictly hold.

\begin{theorem}
\label{tm:KKTSDPconvex}
Assume the Lagrange multipliers are expressed as in
\reff{eq:as:ratexp} with denominators positive on $X$.
Suppose that each objective $f_i$ is convex in $x_i$,
each $g_{i,j}$ is linear in $x_i$ for $j \in \mc{I}^{(i)}_0$,
and each strategy set $X_i(x_{-i})$ is convex, nondegenerate,
and satisfies Slater's condition.
Then, Algorithm~\ref{ag:KKTSDP} terminates at the initial loop $k=0$,
and it either returns a GNE or detects nonexistence of GNEs.
\end{theorem}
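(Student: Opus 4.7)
The plan is to show that under the convexity hypotheses the KKT set $\mc{K}$ in \reff{eq:KKTweak} coincides with the set $\mc{S}$ of GNEs. Once this is established, termination at $k=0$ is immediate: at the initial loop $\mathscr{U} = \mc{K}$, so either \reff{eq:KKTwithpolyext} is infeasible (which, combined with $\mc{K} = \mc{S}$, correctly certifies that no GNE exists), or it returns some $u \in \mc{K} = \mc{S}$; in the latter case $u$ is already a GNE, so each $\delta_i$ in \reff{eq:checkopt:alg} equals $0$ (attained by $x_i = u_i$), the set $\mc{N}$ is empty, and Step~2 outputs $u$.

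For the inclusion $\mc{K} \subseteq \mc{S}$, I would fix $u \in \mc{K}$ and $i \in [N]$ and show $u_i$ minimizes $\mbox{F}_i(u_{-i})$. The KKT identity, together with $\tau_{i,j}(u) g_{i,j}(u) = 0$ and $\tau_{i,j}(u) \ge 0$ for $j \in \mc{I}^{(i)}_1$, writes $\nabla_{x_i} f_i(u)$ as a sum of terms $\tau_{i,j}(u) \nabla_{x_i} g_{i,j}(u)$ over active inequalities and equalities, with nonnegative coefficients on the former and arbitrary real coefficients on the latter. The technical step is to verify, for every $y_i \in X_i(u_{-i})$, that $\nabla_{x_i} g_{i,j}(u)^T(y_i - u_i) = 0$ when $j \in \mc{I}^{(i)}_0$ and $\nabla_{x_i} g_{i,j}(u)^T(y_i - u_i) \ge 0$ when $j \in \mc{I}^{(i)}_1$ is active. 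The equality case is immediate from linearity of the equality constraints in $x_i$. The inequality case exploits convexity and nondegeneracy of $X_i(u_{-i})$: at a boundary point where $g_{i,j}$ vanishes, $\nabla_{x_i} g_{i,j}(u) \neq 0$, and convexity forces $X_i(u_{-i})$ to lie on one side of the tangent hyperplane, necessarily the side into which $g_{i,j}$ increases. Summing the resulting inequalities with the KKT multipliers gives $\nabla_{x_i} f_i(u)^T(y_i - u_i) \ge 0$ for every $y_i \in X_i(u_{-i})$, and convexity of $f_i(\cdot, u_{-i})$ promotes this first-order condition to global optimality.

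For the reverse inclusion $\mc{S} \subseteq \mc{K}$, each GNE minimizes a convex objective over a convex feasible set satisfying Slater's condition, hence is a critical pair of the corresponding player's problem by standard constraint qualification arguments; the rational expression \reff{eq:as:ratexp}, whose denominators are positive on $X$ by hypothesis, then writes the multipliers as $\tau_{i,j}(u)$ and places $u$ in $\mc{K}$. The main obstacle is the convex-support argument for inequality constraints in the preceding paragraph, since the paper does not assume concavity of individual $g_{i,j}$ in $x_i$; the role of the nondegeneracy condition is precisely to supply a well-defined supporting hyperplane at each active boundary point, without which $\mc{K}$ could contain spurious non-GNE points and termination at $k=0$ could fail.
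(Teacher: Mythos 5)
Your overall strategy---establish $\mc{K} = \mc{S}$ and then read off termination at the loop $k=0$---is exactly the paper's, except that the paper obtains the equivalence by citing \cite{Las10CovRep} rather than proving it. Your argument for the inclusion $\mc{K} \subseteq \mc{S}$ is essentially sound, though the ``one side of the tangent hyperplane'' claim is cleaner if argued directly: for an active $j \in \mc{I}^{(i)}_1$ the function $t \mapsto g_{i,j}(u_i + t(y_i - u_i), u_{-i})$ is nonnegative on $[0,1]$ (the segment stays in the convex set $X_i(u_{-i})$) and vanishes at $t=0$, so its right derivative at $0$ is nonnegative, i.e., $\nabla_{x_i} g_{i,j}(u)^T(y_i - u_i) \ge 0$. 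Note that this step needs neither nondegeneracy nor a supporting-hyperplane argument.

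The gap is in the reverse inclusion $\mc{S} \subseteq \mc{K}$, which you dispose of as ``standard constraint qualification arguments.'' It is not standard here: the textbook Slater condition yields KKT necessity only when the inequality constraint functions are concave (and the equalities affine), whereas the theorem assumes only that the set $X_i(x_{-i})$ is convex. The passage from ``$-\nabla_{x_i} f_i(u)$ lies in the normal cone of $X_i(u_{-i})$ at $u_i$'' to ``that normal cone is generated by the gradients of the active constraints'' is precisely the content of \cite{Las10CovRep}, and it is exactly where nondegeneracy enters. You have the role of nondegeneracy reversed: it is not needed to exclude spurious non-GNE points from $\mc{K}$ (sufficiency holds without it, as above), but to guarantee that every GNE is a KKT point, so that an empty $\mc{K}$ genuinely certifies nonexistence. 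The paper's Example~\ref{eq:infKKT} is a convex GNEP satisfying Slater's condition whose unique GNE is \emph{not} a KKT point because nondegeneracy fails there; under your reading of the hypotheses that example would be impossible. Without a correct proof of $\mc{S} \subseteq \mc{K}$, the ``detects nonexistence of GNEs'' half of the conclusion is unsupported.
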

\begin{proof}
Under the given assumptions, a feasible point is a minimizer of
the optimization $\mbox{F}_i(x_{-i})$ if and only if it is a KKT point.
This is shown in \cite{Las10CovRep}.
Equivalently, a point is a GNE if and only if
it belongs to the set $\mc{K}$.
If there is a GNE, Algorithm~\ref{ag:KKTSDP}
can get one in Step~2 for the initial loop $k=0$,
and then it terminates.
If there is no GNE, the KKT point set $\mc{K}$ is empty,
then Algorithm~\ref{ag:KKTSDP} terminates in Step~1 for the initial loop.
\end{proof}

We remark that if there exist a matrix function $T_i(x)$ and a scalar function
$q_i(x)$ such that \[ T_i(x) G_i(x) = q_i(x) I_{m_i} \]
and $q_i(x) > 0$ on $X$ (see \reff{eq:Clmd=df} for $G_i(x)$),
then $X_i(\xmi)$ must be nondegenerate.
This can be implied by \cite[Proposition~3.6]{Nie2021convex}.
Moreover, when each $g_{i,j}$ is linear in $x_i$ for $j \in \mc{I}^{(i)}_0$
and every $g_{i,j}$ is concave in $x_i$ for $j \in \mc{I}^{(i)}_1$,
the $X_i(\xmi)$ is nondegenerate when it satisfies
Slater's condition \cite{Las10CovRep}.
When the nondegeneracy condition fails,
a GNE may not be a KKT point,
even under the convexity assumption and Slater's condition.
The following is such an example.

\begin{example}
\label{eq:infKKT}  \rm
Consider the GNEP
\begin{equation}
\label{eq:degenGNEP}
\begin{array}{cllcl}
\min\limits_{x_{1} \in \re^2 }& 2x_{1,1}+x_{1,2} &\vline&
\min\limits_{x_{2} \in \re^2 }& \|x_1+x_2\|^2 \\
\st & x_1^Tx_2\ge 0,\ x_{1,1}x_{1,2}\ge0,&\vline&
\st & x_{2,1}- 1\ge0,\ x_{2,2}-1\ge0 .
\end{array}
\end{equation}
In the above, all player's objectives and feasible sets are convex,
and Slater's condition holds.
The feasible set $X_1(x_2)$ is degenerate.
The KKT system for this GNEP is
\be\label{eq:degenKKT}
\left\{\begin{array}{c}
e+e_1 = x_2\lmd_{1,1}+(x_{1,1}e_2+x_{1,2}e_1)\lmd_{1,2},\\
2(x_1+x_2)=e_1\cdot\lmd_{2,1}+e_2\cdot \lmd_{2,2},\\
\lmd_{1,1}\cdot x_1^Tx_2=0,\ \lmd_{1,2}\cdot x_{1,1}x_{1,2}=0,\\
\lmd_{2,1}\cdot (x_{2,1}-1)=0,\ \lmd_{2,2}\cdot (x_{2,2}-1)=0,\\
x_1^Tx_2\ge0,\ x_{1,1}x_{1,2}\ge0,\ x_{2,1}\ge1,\ x_{2,2}\ge1,\\
\lmd_{1,1}\ge0,\ \lmd_{1,2}\ge0,\ \lmd_{2,1}\ge0,\ \lmd_{2,2}\ge0.
\end{array}
\right.
\ee
One may check that (\ref{eq:degenKKT}) has no solutions,
i.e., this convex GNEP does not have any KKT point.
However, the first player's feasible set is degenerate at $x_1 = (0,0)$,
which corresponds to the unique GNE
\[
x^* = (x_1^*,x_2^*),\quad x_1^* = (0,0),\  x_2^* = (1,1).
\]
Since the feasible set is degenerate,
there do not exist LMEs in the form of (\ref{eq:rtnlmd_i})
that have denominators positive on $X$.
However, if we choose
\be\label{eq:degenLME}
\left\{
\begin{array}{rcl}
T_1(x) &=&
\lvt\begin{array}{cccc}
-x_{1,1}x_{1,2} & 0 & x_{1,2} & x_{1,2}\\
x_1^Tx_2 & 0 & -x_{2,1} & -x_{2,1}
\end{array}
\rvt, \\
T_2(x) &=&
\lvt\begin{array}{cccc}
1 & 0 & 0 & 0\\
0 & 1 & 0 & 0\\
\end{array}
\rvt,   \\
q_1(x) &=& (x_{1,2})^2x_{2,2}, \\
q_2(x) &=&  1,
\end{array}
\right.
\ee
then $T_i(x)G_i(x) =  q_i(x)  I_{m_i}$ for each $i = 1, 2$,
and (\ref{eq:rtnlmd_i}) gives the LMEs:
\[
\lmd_{1,1}=\frac{-x_{1,1} }{x_{1,2} x_{2,2}}\frac{\pt f_1}{\pt x_{1,1}},  \quad \lmd_{1,2}=\frac{x_1^Tx_2}{(x_{1,2})^2x_{2,2}}\frac{\pt f_1}{\pt x_{1,1}},
\]
\[
\lmd_{2,1}= \frac{\pt f_2}{\pt x_{2,1}},  \quad
\lmd_{2,2}= \frac{\pt f_2}{\pt x_{2,2}} .
\]
The denominator $q_1$ has zeros on $X$. Interestingly,
Algorithm~\ref{ag:KKTSDP} still finds the GNE in the initial loop
(see Example~\ref{ep:firstepinsc4}(iv)).
\end{example}

Second, we prove that Algorithm~\ref{ag:KKTSDP}
terminates within finitely many loops under
a finiteness assumption on KKT points.
Recall that $\mc{S}$ denotes the set of all GNEs.
When the complement $\mc{K} \setminus \mc{S}$ is a finite set,
Algorithm~\ref{ag:KKTSDP} must terminate
within finitely many loops.

\begin{theorem}  \label{cvg:K-S:finite}
Assume the Lagrange multipliers are expressed as in \reff{eq:as:ratexp}.
Suppose Assumption~\ref{as:ratext} holds for every triple
$(u, i, v_i)$ produced by Algorithm~\ref{ag:KKTSDP}.
If the complement set $\mc{K}\setminus \mc{S}$ is finite,
then Algorithm~\ref{ag:KKTSDP} must terminate within finitely many loops,
and it either returns a GNE or detects its nonexistence.
\end{theorem}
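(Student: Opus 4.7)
My plan is to track two invariants across the loops of Algorithm~\ref{ag:KKTSDP}: (i) the set $\mathscr{U}$ always contains every GNE; (ii) each non-terminating loop strictly deletes the current iterate $u$ from $\mathscr{U}$. Combined with the finiteness of $\mc{K}\setminus\mc{S}$, these force termination in finitely many loops.

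First I would prove invariant (i). Since $\mathscr{U} \subseteq \mc{K}$ throughout and LMEs are assumed to hold, every GNE belongs to $\mc{K}$, and initially $\mc{S}\subseteq \mathscr{U}$. Suppose inductively that $\mc{S}\subseteq \mathscr{U}$ at the current loop; consider any GNE $x\in \mc{S}$. For each $i\in\mc{N}$ let $p_{i}$ be the rational feasible extension provided by Assumption~\ref{as:ratext}; by \reff{eq:ratext}, $p_{i}(x)\in X_i(x_{-i})$ because $x\in\mc{K}$. Since $x$ is a GNE, $f_i(x_i,x_{-i})\le f_i(p_{i}(x),x_{-i})$, i.e.\ the added constraint in \reff{def:mathscrK} is satisfied at $x$. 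Hence $x$ survives the update and $\mc{S}\subseteq\mathscr{U}$ continues to hold.

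Next I would prove invariant (ii). Suppose the loop does not terminate in Step~2, so $\mc{N}\ne\emptyset$; pick any $i\in\mc{N}$ with minimizer $v_i$ and $\delta_i<0$. The feasible extension $p_{i}$ constructed in Step~3 satisfies $p_{i}(u)=v_i$, so
\[
f_i(p_{i}(u),u_{-i})-f_i(u_i,u_{-i}) \;=\; f_i(v_i,u_{-i})-f_i(u_i,u_{-i}) \;=\; \delta_i \;<\;0,
\]
which violates the new constraint in \reff{def:mathscrK}. Therefore $u\notin \mathscr{U}$ in the next loop. Consequently, if $u^{(0)},u^{(1)},\ldots$ are the successive iterates produced when the algorithm does not terminate, each $u^{(k)}\in\mathscr{U}^{(k)}\subseteq\mc{K}$, each $u^{(k)}\notin\mc{S}$ (otherwise Step~2 would stop), and by invariant (ii) all $u^{(k)}$ are pairwise distinct points of the finite set $\mc{K}\setminus\mc{S}$. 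Thus the number of non-terminating loops is at most $\lvert \mc{K}\setminus\mc{S}\rvert<\infty$.

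Finally, I would conclude the dichotomy. Since termination is forced within finitely many loops, either Step~2 returns a point $u$ with $\mc{N}=\emptyset$, which by construction is a GNE, or Step~1 declares \reff{eq:KKTwithpolyext} infeasible, meaning $\mathscr{U}=\emptyset$. In the latter case, invariant (i) gives $\mc{S}\subseteq\mathscr{U}=\emptyset$, so no GNE exists, and the algorithm has correctly detected nonexistence. The main subtlety is invariant (i): it relies crucially on the fact that the feasible extension is guaranteed on all of $\mc{K}$ (not merely at $u$), so that an arbitrary GNE can invoke $p_{i}(x)\in X_i(x_{-i})$ to validate the added separating inequality; everything else is bookkeeping built on that separation property.
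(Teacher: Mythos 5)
Your proof is correct and follows essentially the same route as the paper's: each non-terminating loop strictly removes the current iterate (a point of $\mc{K}\setminus\mc{S}$) from $\mathscr{U}$ while never removing any GNE, so termination occurs within $|\mc{K}\setminus\mc{S}|$ loops. The only extra assumption you lean on is that every GNE lies in $\mc{K}$ (needed for your reading of the infeasible case as full nonexistence, whereas the algorithm's actual output is the more cautious ``either no GNE exists or no GNE lies in $\mc{K}$''); otherwise your invariants (i) and (ii) are exactly the paper's argument made slightly more explicit.
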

\begin{proof}
When $\mathcal{K}\setminus\mathcal{S}=\emptyset$,
the algorithm terminates in the initial loop $k = 0$.
When $\mathcal{K}\setminus\mathcal{S}  \ne \emptyset$
and some $u\in\mathcal{K}\setminus\mathcal{S}$
is the minimizer of (\ref{eq:KKTwithpolyext}),
the set $\mc{N} \ne \emptyset$.
For each $i \in \mc{N}$, there exists
$v_i\in\mathcal{S}_i(u_{-i})$ such that
\[
\delta_i = f_i(v_i,u_{-i})-f(u_i, u_{-i})<0.
\]
By Assumption~\ref{as:ratext}, the set $\mathscr{U}$
is updated with the newly added constraints (for $i \in \mc{N}$)
\[
f_i(p_i(x),x_{-i})-f(x_i, x_{-i}) \,  \ge \,  0 .
\]
The point $u$ does not belong to $\mathscr{U}$ for all future loops.
The cardinality of the set $\mathcal{K}\setminus\mathscr{U}$
decreases at least by one, after each loop.
Note that $\mathscr{U} \subseteq \mc{K}$.
Therefore, if $\mathcal{K}\setminus\mathcal{S}$ is a finite set,
then Algorithm~\ref{ag:KKTSDP} must terminate
within finitely many loops.

Next, suppose Algorithm \ref{ag:KKTSDP}
terminates with a minimizer $u$ in Step~2.
Then $\delta_i\ge 0$ for all $i$,
so every $u_i$ is a minimizer of $\mbox{F}_i(u_{-i})$,
i.e., $u$ is a GNE.
\end{proof}

In Theorem~\ref{cvg:K-S:finite},
the set $\mc{K}\setminus\mc{S}$ being finite is a genericity assumption.
For GNEPs given by generic polynomials, there are finitely many KKT points.
This is shown in the recent work~\cite{Nie2022degree}.
For GNEPs given by generic rational functions.
This can be shown by a similar argument as in \cite[Theorem~3.1]{Nie2022degree}.
Moreover, we remark that the cardinality $|\mc{K}\setminus \mc{S}|$
is only an upper bound for the number of loops taken by Algorithm~\ref{ag:KKTSDP}.
This bound is certainly not sharp, because
the inequality constraint (\ref{ineq:f(p)-f(x)}) may preclude several
(or even all) KKT points that are not GNEs.
In our numerical experiments, Algorithm~\ref{ag:KKTSDP}
often terminates within a few loops.

For some special problems, the KKT point set may be infinite.
When the complement set $\mc{K} \setminus \mc{S}$ is infinite,
Algorithm~\ref{ag:KKTSDP} may not be guaranteed to terminate
within finitely many loops.
However, we can prove its asymptotic convergence under certain assumptions.
For each $i=1, \ldots,N$, we define the $i$th player's value function
\be  \label{eq:valuefunc}
\nu_i(x_{-i}) \, \coloneqq \,
\inf_{x_i\in X_i(x_{-i})} \, f_i(x_i, x_{-i}) .
\ee
The function $\nu_i(x_{-i})$ is continuous
under certain conditions, e.g.,
under the restricted inf-compactness (RIC) condition
(see \cite[Definition 3.13]{GuoLinYeZhang}).
A sequence of functions $\{\phi^{(k)}(x)\}$ is said to be
\textit{uniformly continuous} at a point $x^*$ if for each $\epsilon>0$,
there exists $\tau >0$ such that
$\Vert \phi^{(k)}(x)-\phi^{(k)}(x^*)\Vert<\epsilon$
for all $k$ and for all $x$ with $\| x - x^* \| < \tau$.
The following is the asymptotic convergence result.

\begin{theorem}
\label{thm:alg:convergence}
For the GNEP \reff{eq:GNEP},
suppose Lagrange multipliers can be expressed as in \reff{eq:as:ratexp}
and Assumption \ref{as:ratext} holds for every triple
$(u,i,v_i)$ produced by Algorithm~\ref{ag:KKTSDP}.
In the $k$th loop, let $u^{(k)}$, $v_i^{(k)}$ be the minimizers of
\reff{eq:KKTwithpolyext}, \reff{eq:checkopt:alg}
respectively and let $p_i^{(k)}$ be the feasible extension in Step~3.
Suppose $u^* \coloneqq (u_1^*,\ldots,u_N^*)$ is an accumulation point of
the sequence $\{ u^{(k)} \}_{k=1}^\infty$.
If for each $i=1,\ldots,N$,
\begin{enumerate}

\item[i)] the strict inequality $g_{i,j}(u^*) >0$
holds for all $j \in \mc{I}^{(i)}_2$, and

\item[ii)]
the value function $\nu_i(x_{-i})$ is continuous at $u_{-i}^*$, and

\item[iii)]
the sequence of feasible extensions $\{ p_i^{(k)} \}_{k=1}^\infty$
is uniformly continuous at $u^*$,
\end{enumerate}
then $u^*$ is a GNE for (\ref{eq:GNEP}).
\end{theorem}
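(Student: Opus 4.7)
The plan is to argue by contradiction that every player's optimality condition holds at $u^*$, exploiting the cumulative effect of the constraints that Algorithm~\ref{ag:KKTSDP} appends to $\mathscr{U}$ whenever a player in $\mc{N}^{(k)}$ is identified. First I fix a subsequence with $u^{(k_j)}\to u^*$. Since each $u^{(k_j)}\in\mc{K}\subseteq X$ and the equality and weak-inequality constraints defining $X$ are closed, they persist at $u^*$; the strict inequalities survive by hypothesis (i). Thus $u^*\in X$, and in particular all denominators appearing in the $f_i$'s and the LMEs $\tau_{i,j}$ are positive at $u^*$, so these rational functions are continuous in a neighborhood. Feasibility gives $f_i(u_i^*,u_{-i}^*)\ge\nu_i(u_{-i}^*)$ trivially, so the theorem reduces to proving the reverse inequality for each $i$.

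Suppose for contradiction that $\delta_i^*\coloneqq\nu_i(u_{-i}^*)-f_i(u_i^*,u_{-i}^*)<0$ for some $i$. By continuity of $\nu_i$ at $u_{-i}^*$ (hypothesis ii) and of $f_i$ at $u^*$,
\[
\delta_i^{(k_j)}=\nu_i(u_{-i}^{(k_j)})-f_i(u_i^{(k_j)},u_{-i}^{(k_j)})\ \longrightarrow\ \delta_i^*<0,
\]
so $\delta_i^{(k_j)}<0$ for all sufficiently large $j$, which means $i\in\mc{N}^{(k_j)}$. Fix any such $j$. Step~3 then permanently adds to $\mathscr{U}$ the inequality $f_i(p_i^{(k_j)}(x),x_{-i})\ge f_i(x_i,x_{-i})$, so every later iterate $u^{(k_{j'})}$ with $j'>j$ satisfies
\[
f_i\bigl(p_i^{(k_j)}(u^{(k_{j'})}),u_{-i}^{(k_{j'})}\bigr)\ \ge\ f_i\bigl(u_i^{(k_{j'})},u_{-i}^{(k_{j'})}\bigr).
\]
Letting $j'\to\infty$ along the subsequence, continuity of the fixed rational function $p_i^{(k_j)}$ (a single-index consequence of (iii)) together with continuity of $f_i$ yields
\[
f_i\bigl(p_i^{(k_j)}(u^*),u_{-i}^*\bigr)\ \ge\ f_i(u_i^*,u_{-i}^*).
\]

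Now I let $j\to\infty$ in the displayed inequality. Uniform continuity of $\{p_i^{(k)}\}$ at $u^*$ (hypothesis iii) gives $\|p_i^{(k_j)}(u^*)-p_i^{(k_j)}(u^{(k_j)})\|\to 0$, and since $p_i^{(k_j)}(u^{(k_j)})=v_i^{(k_j)}$ by the defining property of the feasible extension, we get $\|p_i^{(k_j)}(u^*)-v_i^{(k_j)}\|\to 0$. Combined with
\[
f_i\bigl(v_i^{(k_j)},u_{-i}^{(k_j)}\bigr)=\nu_i\bigl(u_{-i}^{(k_j)}\bigr)\ \longrightarrow\ \nu_i(u_{-i}^*)
\]
(hypothesis ii) and joint continuity of $f_i$ on a neighborhood of the accumulation points in play, this forces $f_i(p_i^{(k_j)}(u^*),u_{-i}^*)\to\nu_i(u_{-i}^*)$. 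Hence $\nu_i(u_{-i}^*)\ge f_i(u_i^*,u_{-i}^*)$, contradicting $\delta_i^*<0$, and $u^*$ is a GNE.

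The main obstacle is orchestrating the two nested limits, first $j'\to\infty$ and then $j\to\infty$, while the feasible extension $p_i^{(k_j)}$ itself changes with $j$; this is exactly why uniform, rather than pointwise, continuity of the family $\{p_i^{(k)}\}$ at $u^*$ is indispensable, since otherwise the estimate $p_i^{(k_j)}(u^{(k_j)})\approx p_i^{(k_j)}(u^*)$ could degrade as $j\to\infty$. A minor technical care is checking that the pairs $(p_i^{(k_j)}(u^*),u_{-i}^*)$ remain in the domain of $f_i$, which follows because each $p_i^{(k_j)}(u^{(k_{j'})})\in X_i(u_{-i}^{(k_{j'})})$ for $j'>j$ (by Assumption~\ref{as:ratext} together with $u^{(k_{j'})}\in\mc{K}$), and the denominators of $f_i$ stay positive on a neighborhood of $u^*$ by hypothesis (i).
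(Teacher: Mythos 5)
Your proposal is correct and follows essentially the same route as the paper's proof: both rest on the persistence of the inequalities $f_i(p_i^{(k')}(x),x_{-i})\ge f_i(x_i,x_{-i})$ added to $\mathscr{U}$, the identity $f_i(p_i^{(k)}(u^{(k)}),u_{-i}^{(k)})=\nu_i(u_{-i}^{(k)})$, the continuity of $\nu_i$, and the uniform continuity of $\{p_i^{(k)}\}$ at $u^*$ to pass both limits. The only cosmetic difference is that you argue by contradiction for a single offending player $i$ (first showing $i\in\mathcal{N}^{(k_j)}$ for large $j$), whereas the paper argues directly for all players by setting $p_i^{(k)}(x)=x_i$ when $i\notin\mathcal{N}$.
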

\begin{proof}
Up to the selection of a subsequence, we assume that
$u^{(k)}\rightarrow u^*$ as $k\rightarrow \infty$,
without loss of generality.
The condition i) implies that $u^*\in X$ and $u_i^*\in X_i(u_{-i}^*)$
for every $i$. We need to show that each $u_i^*$
is a minimizer for the optimization $\mbox{F}_i(u_{-i}^*)$.
By the definition of $\nu_i$ as in (\ref{eq:valuefunc}),
this is equivalent to showing that
\begin{equation}
\label{eq:nu_diff}
\nu_i(u_{-i}^*)-f_i(u^*)\ge 0,\quad
i = 1, \ldots, N.
\end{equation}
For convenience of notation, let $p_i^{(k)}(x)=x_i$
for each $i\not\in\mathcal{N}$, in the $k$th loop.
Since $u^{(k)}$ is feasible for \reff{eq:KKTwithpolyext}
in all previous loops, we have that
\[
f_i(p_i^{(k')}(u^{(k)}),u_{-i}^{(k)})-f_i(u^{(k)})\ge 0,
\quad \mbox{for all} \quad   k'\le k.
\]
As $k\rightarrow\infty$, the above implies that
\[
f_i(p_i^{(k')}(u^{*}),u_{-i}^*)-f_i(u^*)\ge 0,
\quad \mbox{for all} \quad  k' .
\]
Then, for every $i$ and for every $k\in\mathbb{N}$,
\begin{equation}
\label{eq:nu_diffstar}
\begin{aligned}
&\qquad \nu_i(u_{-i}^*)-f_i(u^*)  \\
&=\big(\nu_i(u_{-i}^*)-f_i(p_i^{(k)}(u^{*}),u_{-i}^*)\big)+
\big(f_i(p_i^{(k)}(u^{*}),u_{-i}^*)-f_i(u^*)\big)  \\
& \ge \nu_i(u_{-i}^*)-f_i(p_i^{(k)}(u^{*}),u_{-i}^*).
\end{aligned}
\end{equation}
Note that $\nu_i(u_{-i}^{(k)})=f_i(p_i^{(k)}(u^{(k)}),u_{-i}^{(k)})$
for all $k$ and for all $i\in\mc{N}$ in the $k$th loop.
Indeed, this is clear by construction when $i\in\mc{N}$.
For $i\notin\mc{N}$, we know $u_i^{(k)}$ is a minimizer for $F_i(u_{-i}^{(k)})$.
Let $p_i^{(k)}(x)=x_i$, then
\[
\nu_i(u_{-i}^{(k)})=f_i(u_i^{(k)},u_{-i}^{(k)})=f_i(p_i^{(k)}(u^{(k)}),u_{-i}^{(k)}).
\]
Under the continuity assumption of $\nu_i$ at $u_{-i}^*$,
the convergence $u^{(k)} \to u^*$ implies that
\[
\nu_i(u_{-i}^*)=\lim_{k\rightarrow\infty}\nu_i(u_{-i}^{(k)})
=\lim_{k\rightarrow\infty} f_i(p_i^{(k)}(u^{(k)}),u_{-i}^{(k)}).
\]
Because $\{ p_i^{(k)} \}_{k=1}^\infty$
is uniformly continuous at $u^*$,
for every fixed $\epsilon>0$, there exists $\tau>0$
such that for all $k$ big enough, we have
\[
\|u^*-u^{(k)}\|< \tau,\quad
\Vert p_i^{(k)}(u^{*})-p_i^{(k)}(u^{(k)})\Vert <\epsilon.
\]
Since $f_i$ is rational and the denominator is positive on $X$,
we have
\[
f_i(p_i^{(k)}(u^{*}),u_{-i}^*)-f_i(p_i^{(k)}(u^{(k)}),u_{-i}^{(k)})
\rightarrow 0\quad \text{as}\quad k\rightarrow\infty.
\]
In view of the inequality (\ref{eq:nu_diffstar}),
we can conclude that $\nu_i(u_{-i}^*)-f_i(u^*)\ge 0$.
This shows that $u^*$ is a GNE.
\end{proof}

When there are strict inequality constraints (i.e., $\mc{I}_2^{(i)}\not=\emptyset$),
the RIC condition is more subtle to check but it is still applicable.
Please note that the strict inequality
$g_{i,j}(x_i, x_{-i}) > 0$ is equivalent to
\[
g_{i,j}(x_i, x_{-i})\cdot  (z_{i,j})^2 = 1,
\]
for a new variable $z_{i,j}$. Similarly, rational functions can be equivalently
reformulated as polynomials by introducing new variables.
Therefore, the value function $\nu_i(x_{-i})$
can be equivalently expressed as the optimal value
of a polynomial optimization problem
with weak inequalities only, in a higher dimensional space.
If the RIC holds for the new formulation,
then one can show the continuity of $\nu_i(x_{-i})$.
There exist some conveniently checkable conditions
for RIC (e.g., see \cite[\S6.5.1]{Clarke}).
For instance, this is the case if the feasible set is compact
or the objective satisfies some growth conditions.
However, checking RIC directly for the rational optimization with strict inequality constraints is typically difficult.
This issue is outside the scope of this paper.

Feasible extensions are sometimes given by polynomials.
For such cases, a sufficient condition for the condition iii)
of Theorem~\ref{thm:alg:convergence} to hold is
that the degrees and coefficients of $\{p_i^{(k)}\}_{k=1}^{\infty}$ are uniformly bounded.
As shown in Subsection~\ref{ssc:commonFE}, when $F_i(x_{-i})$
has box, simplex or ball constraints, feasible extensions have explicit expressions,
and the corresponding polynomial function sequence
$\{p_i^{(k)}\}_{k=1}^{\infty}$ has uniformly bounded degrees and coefficients.
For rational feasible extensions, the condition iii)
is harder to check that needs to be checked case by case.

We would like to remark that
Theorems~\ref{tm:KKTSDPconvex} and \ref{cvg:K-S:finite}
only give sufficient conditions for Algorithm~\ref{ag:KKTSDP}
to terminate within finitely many loops.
But these conditions are not necessary.
In other words, Algorithm~\ref{ag:KKTSDP} may still have finite convergence even if $|\mc{K}\setminus\mc{S}|=\infty$.
This is because the positive definite matrix $\Theta$ is generically selected
(so the optimization \reff{eq:KKTwithpolyext} has a unique minimizer)
and feasible extensions may preclude several
(or even all) KKT points that are not GNEs.
We refer to Example~\ref{ep:quad1}(i)-(ii) for such cases.
When Algorithm~\ref{ag:KKTSDP} does not terminate within finitely many loops,
Theorem~\ref{thm:alg:convergence} proves the asymptotic convergence
under certain assumptions.
We would like to remark that Algorithm~\ref{ag:KKTSDP}
does not need to check if these assumptions are satisfied or not,
because it is self-verifying.
By solving the optimization \reff{eq:checkopt:alg} for each player,
we get a candidate GNE and then verify if it is a true GNE or not.
This does not require checking any other assumptions.

\section{Feasible extensions of KKT points}
\label{sc:RP}

In this section, we discuss the existence and computation
of feasible extensions $p_i$ required as in Assumption~\ref{as:ratext}.
They are important for solving GNEPs.

\subsection{Some common cases}
\label{ssc:commonFE}
The feasible extensions in Assumption~\ref{as:ratext} can be explicitly given for
some common cases of optimization problems.
Suppose the triple $(u,i,v_i)$ is given.

\medskip
\noindent
\textit{Box constraints} \,
Suppose the feasible set of $\mbox{F}_i(x_{-i})$ is
\[
a(\xmi)\le A(\xmi) x_i\le b(\xmi),
\]
where $a,b\in \mathbb{R}[\xmi]^{m_i},\,A\in \mathbb{R}[x_{-i}]^{m_i\times n_i}$.
Suppose $A(x_{-i})$ has full row rank for all $x \in X$
and there is a matrix polynomial $B_0(x_{-i})$ such that
\[
B(x_{-i}) \coloneqq \begin{bmatrix}
A(x_{-i})^T & B_0(x_{-i})
\end{bmatrix}\in \mathbb{R}[x_{-i}]^{n_i\times n_i}
\]
is nonsingular for all $x \in X$.
Let $\mu \coloneqq (\mu_{1},\,\ldots,\,\mu_{m_i})$ be the vector such that
\[
\big(b_j(u_{-i})-a_j(u_{-i})\big)\cdot\mu_j
= b_j(u_{-i})-(B(u_{-i})^Tv_i)_j.
\]
For the case $a_j(u_{-i})=b_j(u_{-i})$,
we just let $\mu_j=0$.
Since $v_i\in X_i(u_{-i})$,
it is clear that each $\mu_j \in [0,1]$.
Then we choose $p_i$ as
\begin{equation}
\label{eq:boxext}
p_i=B(x_{-i})^{-T}\hat{p}_i,
\end{equation}
where $\hat{p}_i=(\hat{p}_{i,1},\ldots, \hat{p}_{i,n_i})$ is defined by
\[
\hat{p}_{i,j}(x) \coloneqq \left\{
\begin{array}{ll}
\mu_ja_j(x_{-i})+(1-\mu_j)b_j(x_{-i}), &  1\le j\le m_i\\
(B(x_{-i})^Tx)_j. & m_i+1\le j\le n_i
\end{array}\right.
\]
One can check that $p_i( u)=v_i$ and
$p_i(x)\in X_i(x_{-i})$ for all $x\in \mathcal{K}\subseteq X$.

We would like to make some remarks for the existence of $B_i(x_{-i})$
that is nonsingular for all $x\in X$.
When $A_i(x_{-i})=A_i$ is independent with $x_{-i}$,
such a constant matrix $B_i$ always exists.
When $A_i(x_{-i})$ is dependent with $x_{-i}$, we may still have such a $B_i(x_{-i})$.

\begin{example}
Consider the two-player's GNEP with $x_1\in\re^1,x_2 = (x_{2,1},x_{2,2})\in\re^2$.
Suppose $X_1(x_2) = \{x_1: (x_1)^2 \le \|x_2\|^2\}$
and $X_2(x_1)$ is given by the inequalities
\[
0\le \underbrace{\bbm x_1 & 1+x_1\ebm}_{A(x_1)} \bbm x_{2,1}\\x_{2,2}\ebm\le 3-x_1.
\]
The $A(x_1)$ has full row rank for all $x\in X$.
We can construct
\[
B(x_1) = \left[\begin{array}{rr}
x_1 & x_1-1\\ 1+x_1 & x_1
\end{array}\right]
\]
such that $\det(B(x_1)) = (x_1)^2-((x_1)^2-1) = 1$.
Therefore, the matrix $B(x_1)$ is nonsingular for all $x_1 \in \re^1$.
\end{example}

\medskip
\noindent
\textit{Simplex constraints}\,
Suppose the feasible set $X_i(x_{-i})$ is given as
\[
d(x_{-i})^Tx_i\le b(x_{-i}),\quad c_j(x_{-i}) x_{i,j} \ge a_j(x_{-i}),\, j\in[n_i].
\]
In the above, $b\in\mathbb{R}[x_{-i}]$, $a = (a_1,\ldots, a_{n_i})$,
$c = (c_1,\ldots,c_{n_i})$ and $d$ are vectors of polynomials in $x_{-i}$.
Assume $c(x_{-i}),d(x_{-i})>0$ for all $x=(x_i,x_{-i})\in X$.
For convenience, use $\odot$ to denote the entrywise product, i.e.,
\[
(c^{-1} \odot a)(x_{-i}) := \left( c_1^{-1}(x_{-i})a_1(x_{-i}),\ldots,
c_{n_i}^{-1}(x_{-i})a_{n_i}(x_{-i})
\right)^T.
\]
Let $\mu \coloneqq (\mu_1,\ldots,\mu_{n_i})$ be vector such that
\[
\big((b-d^Tc^{-1}\odot a)(u_{-i})\big)\cdot\mu_j=v_{i,j}-(c_j^{-1}a_j)(u_{-i}).
\]
For the case that $b(u_{-i})=(d^Tc^{-1}\odot a)(u_{-i})$, just choose $\mu_j=0$.
For $v_i\in X_i(u_{-i})$,  each $\mu_j \in [0,1]$.
Then we choose $p_i \coloneqq (p_{i,1},\ldots,p_{i,n_i})$ such that
\begin{equation}
\label{eq:simpext}
p_{i,j}(x)=\mu_j\cdot\big((b-d^Tc^{-1}\odot a)(x_{-i})\big)+(c_j^{-1}a_j)(x_{-i}).
\end{equation}
One can check that $p_i(u)=v_i$ and
$p_i(x)\in X_i(x_{-i})$ for all $x\in \mathcal{K}\subseteq X$.

\medskip
\noindent
\textit{Ball constraints} \,
Suppose $X_i(\xmi)$ is given as
\[
{\sum}_{j=1}^{n_i}\big( a_j(\xmi)x_{i,j}-c_j(\xmi) \big)^2 \le (R(\xmi))^2,
\]
where $R\in\mathbb{R}[\xmi]$, and
$a = (a_1,\ldots,a_{n_i}),\,c = (c_1,\ldots,c_{n_i})$
are vectors of rational functions in $x_{-i}$.
Assume $a_j(x_{-i})\not=0$ on $X$.
Let $\mu$ be such that
\[
\Vert a(u_{-i}) \odot v_i-c(u_{-i})\Vert = \mu |R(\umi)|,\quad
0 \le \mu \le 1 .
\]
Then choose scalars $(s_1,\ldots,s_{n_i})$ such that
\[
\|a(u_{-i})\odot v_i-c(u_{-i})\|  \cdot s_j =
a_j(u_{-i}) v_{i,j}-c_j(u_{-i}).
\]
For the case $\|a(u_{-i})\odot v_i-c(u_{-i})\| =0$, just let $s_j=1/\sqrt{n_i}$.
Then we can choose $p_i \coloneqq (p_{i,1},\ldots, p_{i,n_i})$ as
\be \label{eq:annular}
p_{i,j}(x) \coloneqq
\big( c_j(x_{-i})+s_j\cdot  \mu \cdot  R(\xmi) \big) / a_j(x_{-i}).
\ee
One can verify that $p_i(u)=v_i$ and $p_i(x)\in X_i(x_{-i})$
for all $x\in \mathcal{K}\subseteq X$.

\subsection{The existence of feasible extensions}
\label{ssc:fe}

The existence of rational feasible extensions
in Assumption~\ref{as:ratext} can be shown under some assumptions.
We consider the general case that the KKT set
$\mc{K}$ as in \reff{eq:KKTweak} is finite.
A polynomial feasible extension $p_i$
exists when $\mathcal{K}$ is finite.

\begin{theorem}     \label{thm:finitK}
Assume $\mathcal{K}$ is a finite set.
Then, for every triple $(u, i, v_i)$ with
$u  \in\mathcal{K}$, $i\in [N]$ and $v_i\in X_i(u_{-i})$,
there must exist a feasible extension $p_i$
satisfying Assumption~\ref{as:ratext}.
Moreover, such $p_i$ can be chosen as a polynomial vector function.
\end{theorem}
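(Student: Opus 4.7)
The plan is to use a standard multivariate Lagrange-type interpolation. Since $\mc{K}$ is finite, write $\mc{K}=\{w^{(1)},\ldots,w^{(L)}\}$ with $u=w^{(1)}$, and define the target values $y^{(1)}\coloneqq v_i$ and $y^{(\ell)}\coloneqq w^{(\ell)}_i\in\re^{n_i}$ for $\ell=2,\ldots,L$. Note that $y^{(\ell)}\in X_i(w^{(\ell)}_{-i})$ for every $\ell$: for $\ell\ge 2$ this is because $w^{(\ell)}\in\mc{K}\subseteq X$, and for $\ell=1$ it holds by the hypothesis $v_i\in X_i(u_{-i})$. The goal is to build a polynomial vector $p_i\in\re[x]^{n_i}$ with $p_i(w^{(\ell)})=y^{(\ell)}$ for all $\ell\in[L]$, because then $p_i(u)=v_i$ and $p_i(x)\in X_i(x_{-i})$ for every $x\in\mc{K}$, as required by Assumption~\ref{as:ratext}.

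The construction proceeds as follows. For each ordered pair $\ell\neq \ell'$, since $w^{(\ell)}\neq w^{(\ell')}$ there is an index $j=j(\ell,\ell')\in[n]$ with $w^{(\ell)}_j\neq w^{(\ell')}_j$; set
\[
q_{\ell,\ell'}(x)\coloneqq \frac{x_j - w^{(\ell')}_j}{w^{(\ell)}_j - w^{(\ell')}_j},
\qquad
L_\ell(x)\coloneqq \prod_{\ell'\neq \ell} q_{\ell,\ell'}(x).
\]
By construction each $L_\ell$ is a polynomial in $x$ with $L_\ell(w^{(\ell)})=1$ and $L_\ell(w^{(\ell'')})=0$ whenever $\ell''\neq\ell$, because the factor $q_{\ell,\ell''}$ vanishes at $w^{(\ell'')}$. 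Now define
\[
p_i(x)\coloneqq \sum_{\ell=1}^L y^{(\ell)} L_\ell(x) \in \re[x]^{n_i}.
\]
Evaluating at $w^{(\ell)}$ yields $p_i(w^{(\ell)})=y^{(\ell)}$, which gives the desired interpolation property.

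Finally, the denominator condition in Assumption~\ref{as:ratext} is automatic, as $p_i$ is a polynomial vector function. Combining the two properties $p_i(u)=v_i$ and $p_i(w^{(\ell)})\in X_i(w^{(\ell)}_{-i})$ for every $w^{(\ell)}\in\mc{K}$, we see that $p_i$ is a feasible extension of $v_i$ at $u$, and it is of polynomial form. The only nonroutine point is verifying that the separating indices $j(\ell,\ell')$ exist, which is immediate from the pairwise distinctness of the finitely many points in $\mc{K}$; thus there is no real obstacle, the argument being essentially multivariate Lagrange interpolation on a finite point set.
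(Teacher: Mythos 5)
Your proposal is correct and follows essentially the same route as the paper: the paper simply invokes polynomial interpolation on the finite set $\mathcal{K}$ to produce a polynomial vector $p_i$ with $p_i(u)=v_i$ and $p_i(z)=z_i$ for all other $z\in\mathcal{K}$, then concludes feasibility from $\mathcal{K}\subseteq X$, exactly as you do. Your only addition is to make the interpolation explicit via a multivariate Lagrange-type basis, which is a valid (and slightly more self-contained) way of justifying the same step.
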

\begin{proof}
Since the set $\mathcal{K}$ is finite, by polynomial interpolation,
there must exist a real polynomial vector function $p_i$ such that
\begin{equation}
\label{eq:polyint}
p_{i}(u) = v_{i},\quad
p_{i}(z) = z_i \quad \mbox{for all} \quad
z \coloneqq (z_1, \ldots, z_N)
\in \mc{K}\setminus  \{u\}.
\end{equation}
Note that $\mc{K} \subseteq X$.
For every $x = (x_1, \ldots, x_N) \in \mc{K}\setminus\{u\}$,
we have $p_i(x) = x_i \in X_{i}(x_{-i})$.
The polynomial function $p_i$ satisfies Assumption~\ref{as:ratext}.
\end{proof}

When the set $\mathcal{K}$ is known,
we can get a polynomial feasible extension $p_i$
as in Theorem~\ref{thm:finitK}, by polynomial interpolation.
The following is such an example.

\begin{example}\rm
Consider Example~\ref{eq:simpsimplex}.
There are four KKT points:
\[\begin{array}{ll}
u^{(1)}_1=u^{(1)}_2=(0,0), &
u^{(2)}_1=u^{(2)}_2=\left(\frac{\sqrt{17}-3}{4},\frac{5-\sqrt{17}}{4}\right), \\
u^{(3)}_1=u^{(3)}_2=\left(\frac{1}{2},0\right), &
u^{(4)}_1=u^{(4)}_2=\left(0,\frac{1}{2}\right). \\
\end{array}\]
The $u^{(1)} = (u_1^{(1)}, u_2^{(1)})$ and
$u^{(2)} = (u_1^{(2)}, u_2^{(2)})$ are not GNEs.
For $u^{(1)}$, there are two minimizers for $\mbF_1(u^{(1)}_2)$,
which are $(1,0)$ and $(0,1)$.
We can construct the feasible extension $p_1$ of $(1,0)$ at $u^{(1)}$
using polynomial interpolation.
Consider a linear function $p_1$ such that
\[
p_1 = (
a_0+a_1x_{1,1}+a_2x_{1,2}+a_3x_{2,1}+a_4x_{2,2},\,
b_0+b_1x_{1,1}+b_2x_{1,2}+b_3x_{2,1}+b_4x_{2,2}).
\]
The equation \reff{eq:polyint} requires that
\[
p_1(u_1^{(1)},u_2^{(1)}) = (1,0),\quad
p_1(u_1^{(k)},u_2^{(k)}) = u_1^{(k)},\, k=2,3,4.
\]
This gives a linear system about coefficients of $p_1$:
\[
\begin{array}{l}
a_0 = 1,\, b_0 = 0,\,\\
a_0+\frac{1}{2}a_1+\frac{1}{2}a_3 = \frac{1}{2},\,
b_0+\frac{1}{2}b_1+\frac{1}{2}b_3 = 0,\\
a_0+\frac{1}{2}a_2+\frac{1}{2}b_4 = 0,\,
b_0+\frac{1}{2}b_2+\frac{1}{2}b_4 = \frac{1}{2},\\
a_0+\frac{\sqrt{17}-3}{4}a_1+\frac{5-\sqrt{17}}{4}a_2+
\frac{\sqrt{17}-3}{4}a_3+\frac{5-\sqrt{17}}{4}a_4 = \frac{\sqrt{17}-3}{4},\\
b_0+\frac{\sqrt{17}-3}{4}b_1+\frac{5-\sqrt{17}}{4}b_2+
\frac{\sqrt{17}-3}{4}b_3+\frac{5-\sqrt{17}}{4}b_4 = \frac{5-\sqrt{17}}{4}.
\end{array}
\]
The above linear system is consistent
and we get the feasible extension
\[
p_1(x_1,x_2) = (1-x_{1,1}-x_{1,2}-x_{2,2}, \quad x_{2,2}).
\]
Similarly, we can also get the feasible extension of
$(0,1)$ at $u^{(1)}$,  which is
\[
(x_{1,1}, \quad  1-x_{2,1}-x_{2,2}-x_{1,1} ).
\]
At the point $u^{(2)}$,
the minimizer of $\mbF_1(u^{(2)}_2)$ is $\left(0,\frac{1}{2}\right)$.
We apply polynomial interpolation again.
The linear system in coefficients of $p_1$ is consistent for $\deg(p_1) = 2$.
The following is a feasible extension
\begin{small}
\[
\baray{ll}
\Big(
x_{2,1}(x_{2,1}-\frac{\sqrt{17}-3}{4} )(x_{2,1}+\frac{3+\sqrt{17}}{2(5-\sqrt{17})}), &
\frac{1}{2}-(x_{2,2}-\frac{1}{2})(x_{2,2}-\frac{5-\sqrt{17}}{4})
(x_{2,2}+\frac{4}{5-\sqrt{17}})
\Big).
\earay
\]
\end{small}
\end{example}

When the set $\mathcal{K}$ is not finite,
Assumption~\ref{as:ratext} may still hold for some GNEPs.
For instance, consider that there are no equality constraints, i.e.,
$\mc{I}^{(i)}_0 =\emptyset$. Suppose $\mathcal{K}$ is compact
and there exists a continuous map
$\rho: \mathbb{R}^n \to \mathbb{R}^{n_i}$ such that $\rho(u)=v_i$
and $g_{i,j}(\rho(x),x_{-i})>0$ for all $x\in\mathcal{K}$
and for all $j\in \mc{I}^{(i)}_1 \cup \mc{I}^{(i)}_2$.
For every $\epsilon>0$, one can approximate $\rho$
by a polynomial $p_i$ such that
$\|p_i-\rho \| < \epsilon$ on $\mathcal{K}$.
Therefore, for $\epsilon$ sufficiently small,
$g_{i,j}(p_i(x),x_{-i}) > 0$ on $x\in\mathcal{K}$.
Such a polynomial function $p_i$ is a feasible extension of $v_i$ at $u$.

\subsection{Computation of feasible extensions}
\label{ssc:cfe}

We discuss how to compute the rational feasible extension $p_i$ satisfying
Assumption~\ref{as:ratext}.
For the set $\mc{K}$ as in \reff{eq:KKTweak},
let $E_0$ denote the set of its equality constraining polynomials
and let $E_1$ denote the set of its
(both weak and strict) inequality ones.
Consider the set
\[
\mathcal{K}_1 \, \coloneqq \,
\left \{ x\in\re^n
\left|\baray{l}
 g(x)=0\,(g\in E_0), \\
g(x)\ge 0\,( g\in E_1 )
\earay\right.
\right \} .
\]
The set $\mc{K}$ may not be closed but $\mc{K}_1$ is,
and the closure of $\mc{K}$ is contained in $\mc{K}_1$. For a polynomial $p(x)$,
if $p(x) \in X_i(x_{-i})$ for all $x \in \mc{K}_1$,
then we also have $p(x) \in X_i(x_{-i})$ for all $\mc{K}$.
Therefore, it is sufficient to get $p_i$ satisfying
Assumption~\ref{as:ratext} with $\mc{K}$ replaced by $\mc{K}_1$.

Suppose the triple $(u, i, v_i)$ is given.
First, choose a {\it priori} degree $l$, and choose a denominator $h$
that is positive on $\mc{K}$ (e.g., one may choose $h=1$).
Then, we consider the following feasibility problem in $(q, \mu)$
\begin{equation}
\label{eq:feasibleRP}
\left\{
\begin{array}{l}
 q  \coloneqq (q_{1},\ldots, q_{n_i}) \in \mathbb{R}[x]_{2l}^{n_i}, \, \,
 \mu \coloneqq  (\mu_j)_{ j \in  \mc{I}^{(i)}_1 \cup  \mc{I}^{(i)}_2 }, \\
 q(u) = h(u)v_{i},  \,
   h \cdot  g_{i,j}(q,x_{-i}) = 0  \, (j\in \mc{I}^{(i)}_0 ),\\
 \mu_{j}  \ge 0 \,  ( j \in  \mc{I}^{(i)}_1  ), \,
          \mu_{j}  > 0 \,  ( j \in  \mc{I}^{(i)}_2  )          \\
 h \cdot  g_{i,j}( q,x_{-i})-\mu_{j} \in \idl[E_0]_{2l}+\qmod[E_1]_{2l} .
\end{array}
\right.
\end{equation}
When all constraining polynomials $g_{i,j}$ are linear in $x_i$,
the system \reff{eq:feasibleRP} is convex in $(q, \mu)$,
and it ensures that $p_i \coloneqq q/h$
is a rational feasible extension
satisfying Assumption~\ref{as:ratext}.
For such a case, a feasible pair $(q, \mu)$ for (\ref{eq:feasibleRP})
can be obtained by solving a linear conic optimization problem.

\begin{example} \rm
\label{ep:SDPRP:yalmip}
Consider the following $2$-player GNEP:
\be\label{eq:constructRP}
\begin{array}{cllcl}
\min\limits_{x_{1} \in \re^2 }& \frac{(x_{2,1}+x_{2,2}-2x_{1,1})(x_{1,1})^2+2x_{1,2}}{x_{2,1}}&\vline&
\min\limits_{x_{2} \in \re^2 }& \frac{x_{2,1}-(x_{2,2})^2}{x_{2,2}+x_{1,1}+x_{1,2}} \\
\st & 2x_{1,1}x_{2,1}-x_{1,2}x_{2,2}\ge0, &\vline& \st &2x_{2,1}x_{2,2}-1\ge 0,\\
&x_{2,1}x_{2,2}-x_{1,1}x_{2,1}\ge 0, &\vline& & 1-x_{2,2}  \ge 0, \\
& 2x_{1,2}x_{2,2}-1\ge 0, & \vline & & 2-x_{2,1} \ge 0, \\
& 2-x_{1,2}x_{2,2}\ge 0 ; & \vline & & x_{2,1}  \ge 0 .
\end{array}
\ee
Consider the triple $(u,1,v_1)$ for $u=(u_1,u_2)$ with
\[
u_1=(0.5,0.5),\quad u_2=(0.5,1),\quad v_1 = (1,0.5).
\]
For $l=2$ and $h = x_{2,1}x_{2,2}$,
a feasible $q$ given by (\ref{eq:feasibleRP}) is $(x_{2,2}, x_{2,1})/2$.
Let $p_1 = \frac{1}{ 2x_{2,1}x_{2,2}} (x_{2,2}, x_{2,1}).$
Then we have each
$h \cdot g_{1,j}(p_1,x_2) \in \idl[E_0]_{2l}+ \qmod[E_1]_{2l}$:
\[
\begin{array}{l}
h \cdot g_{1,1}(p_1, x_2)=0.25+0.25(2x_{2,1}x_{2,2}-1), \\
h \cdot g_{1,2}(p_1, x_2) =(x_{2,1}x_{2,2}-0.5)^2+0.25(2x_{2,1}x_{2,2}-1),\\
h \cdot g_{1,3}(p_1, x_2)=0,\quad
h \cdot g_{1,4}(p_1, x_2)=0.75+0.75(2x_{2,1}x_{2,2}-1).
\end{array}
\]
\end{example}

For the triple $(u,i,v_i)$, when some constraining polynomials
$g_{i,j}$ are nonlinear in $x_i$,
the system \reff{eq:feasibleRP} may not be convex in $(q,\mu)$.
For such cases, it is not clear how to obtain
feasible extensions in a computationally efficient way.
The existence of such $p_i$
is guaranteed when $\mc{K}$ is a finite set.
This is shown in Theorem~\ref{thm:finitK}.
When $\mc{K}$ is fully known,
we can get the $p_i$ by polynomial interpolation.
For other cases, it is not clear for us
how to compute such $p_i$ efficiently.

\section{Rational optimization problems}
\label{sc:ROP}

This section discusses how to solve the rational optimization problems
appearing in Algorithm~\ref{ag:KKTSDP}.

\subsection{Rational polynomial optimization}
\label{sc:intoRO}

A general rational polynomial optimization problem is
\be \label{eq:RatOpt}
\left\{
\baray{rl}
\min  & A(x) \coloneqq \frac{a_1(x)}{a_2(x)}\\
\st   &  x\in K,
\earay
\right.
\ee
where $a_1,a_2\in\mathbb{R}[x]$ and $K\subseteq\mathbb{R}^n$
is a semialgebraic set. We assume the denominator $a_2(x) > 0$ on $K$,
otherwise one can minimize $A(x)$ over two subsets
$K\cap\{a_2(x) > 0\}$ and $K\cap\{-a_2(x) > 0\}$ separately.
Moment-SOS relaxations can be applied to solve \reff{eq:RatOpt}.
We refer to \cite{GloPol3,JdeK06,NDG08} for related work.
Please note that Lagrange multipliers are zeros
for strict inequality constraints. So the KKT system does not need to
consider strict inequality constraints. However, the strict inequalities
are still used in the Moment-SOS relaxations,
because they are relaxed to weak inequality constraints.

The rational optimization problems in Algorithm~\ref{ag:KKTSDP}
may have strict inequalities.
So we consider the case that $K$ is given as
\begin{equation}
\label{rat:feasC}
K=\left\{x\in\mathbb{R}^n\left|
\begin{array}{c}
p(x) = 0 \,(p\in \Psi_0),\\
q(x) \ge 0\, (q\in \Psi_1),\\
q(x) >0\, (q\in \Psi_2)
\end{array}
\right.
\right\},
\end{equation}
where $\Psi_0,\,\Psi_1$ and $\Psi_2$
are finite sets of constraining polynomials in $x$.
Since $a_2(x) >0$ on $K$, we have $A(x)\ge \gamma$ on $K$
if and only if $a_1(x)-\gamma a_2(x)\ge 0$ on $K$, or equivalently,
$a_1-\gamma a_2\in\mathscr{P}_d(K)$, for the degree
\[
d \coloneqq \max\{\deg(a_1),\deg(a_2)\}.
\]
The rational optimization problem (\ref{eq:RatOpt}) is then equivalent to
\be  \label{eq:P(C)}
\left\{
\begin{aligned}
    \gamma^*: = \max\quad &\gamma\\
    \st \quad & a_1(x)-\gamma a_2(x) \in \mathscr{P}_d(K).
    \end{aligned}
    \right.
\ee
Denote the weak inequality set
\be \label{def:K_1}
K_1 := \left\{
x\in\mathbb{R}^n\left|
\begin{array}{c}
p(x)=0\,(p\in\Psi_0),\\
q(x)\ge 0\,(q\in\Psi_1 \cup \Psi_2)
\end{array}
\right.
\right\}.
\ee
Note that $K_1$ is closed and $\mbox{cl}(K) \subseteq K_1$.
We consider the moment optimization problem
\begin{equation}
\label{eq:R(C)}
    \left\{
    \begin{aligned}
      \min &\quad \langle a_1,w\rangle\\
    \st& \quad \langle a_2,w\rangle = 1,\, w\in\mathscr{R}_d(K_1).
    \end{aligned}
    \right.
\end{equation}
It is a moment reformulation for the optimization
\be \label{ratopt:K1}
 \left\{
 \baray{rl}
a^* \coloneqq \min  & A(x) \\
    \st  &  x\in K_1 .
  \earay
 \right.
\ee
Note that \reff{ratopt:K1} is a relaxation of \reff{eq:RatOpt}.
It is worthy to observe that if a minimizer of \reff{ratopt:K1}
lies in the set $K$, then it is also a minimizer of \reff{eq:RatOpt}.

We apply Moment-SOS relaxations to solve (\ref{eq:R(C)}). Let
\be \label{def:d_0}
d_0 \coloneqq \max\big\{\lceil d/2\rceil,\, \lceil \deg(g)/2\rceil \,
(g \in  \Psi_0 \cup \Psi_1 \cup \Psi_2)  \big\}.
\ee
For an integer $k\ge d_0$, the $k$th order SOS relaxation for (\ref{eq:P(C)}) is
\begin{equation}
\label{eq:SOSrat}
\left\{
\begin{aligned}
\gamma^{(k)} \coloneqq \max\quad & \gamma\\
\st\quad & a_1(x)-\gamma a_2(x) \in
    \idl[\Psi_0]_{2k}+\qmod[\Psi_1 \cup \Psi_2]_{2k}.
\end{aligned}
\right.
\end{equation}
The dual optimization of (\ref{eq:SOSrat})
is the $k$th order moment relaxation
\be \label{eq:Momrat}
\left\{ \baray{rl}
 a^{(k)} \coloneqq \min\quad & \langle a_1,y\rangle\\
\st & L_p^{(k)}[y]=0\,(p\in\Psi_0),\\
 & L_q^{(k)}[y]\succeq 0\,(q\in\Psi_1 \cup \Psi_2), \\
 & \langle a_2,y\rangle = 1,\, M_k[y]\succeq 0,\,
 y\in\mathbb{R}^{\mathbb{N}_{2k}^n} .
\earay \right.
\ee
Since (\ref{eq:Momrat}) is a relaxation of \reff{eq:R(C)},
if (\ref{eq:Momrat}) is infeasible,
then (\ref{eq:RatOpt}) is also infeasible.

The following is the Moment-SOS algorithm for solving (\ref{eq:RatOpt}).
It can be conveniently implemented with
the software {\tt GloptiPoly 3} \cite{GloPol3}.

\begin{alg} \label{ag:ratopt} \rm
For the rational optimization problem (\ref{eq:RatOpt}),
let $k \coloneqq d_0$.

\begin{itemize}

\item [Step~1]
Solve the $k$th order moment relaxation~(\ref{eq:Momrat}).
If it is infeasible, then (\ref{eq:RatOpt}) has no feasible points and stop.
Otherwise, solve it for the optimal value $ a^{(k)}$ and a minimizer $y^*$,
if they exist. Let $t \coloneqq d_0$ and go to Step~2.

\item [Step~2]
Check whether or not there is an order $t \in [d_0, k]$ such that
\be \label{eq:flatranklow}
r \coloneqq \Rank{M_t[y^*]} \,=\, \Rank{M_{t-d_0}[y^*]} .
\ee

\item [Step~3] If (\ref{eq:flatranklow}) fails,
let $k \coloneqq k+1$ and go to Step~1;
if \reff{eq:flatranklow} holds, find points $z_1,\ldots,z_r\in K_1$
and scalars $\mu_1,\ldots,\mu_r>0$ such that
\begin{equation}
\label{eq:ydecomp}
y^*|_{2t} = \mu_1[z_1]_{2t}+\cdots+\mu_r[z_r]_{2t} .
\end{equation}

\item [Step~4]
Output each $z_i \in K$ with $a_2(z_i)>0$
as a minimizer of (\ref{eq:RatOpt}).

\end{itemize}
\end{alg}

In Step~2, the rank condition (\ref{eq:flatranklow})
is called \textit{flat truncation}.
It is sufficient and almost necessary for checking convergence
of the Moment-SOS hierarchy  (see \cite{nie2013certifying}).
Once (\ref{eq:flatranklow}) is met,
the moment relaxation (\ref{eq:Momrat}) is tight for solving (\ref{eq:R(C)}),
and the decomposition~\reff{eq:ydecomp} can be
computed by the Schur decomposition \cite{HenLas05}.
This is also implemented in the software {\tt GloptiPoly 3} \cite{GloPol3}.
When $\idl[\Psi_0]+\qmod[\Psi_1 \cup \Psi_2]$ is archimedean,
one can show that $ a^{(k)}\rightarrow a^*$
as $k\rightarrow\infty$ (see \cite{nie2015linear}).
The following is the justification for the conclusion in Step~4.

\begin{theorem}
\label{thm:RO:conv}
Assume $a_2 \ge 0$ on $K_1$.
Suppose $y^*$ is a minimizer of (\ref{eq:Momrat})
and it satisfies (\ref{eq:flatranklow}) for some order $t\in[ d_0, k]$.
Then, each $z_i$ in (\ref{eq:ydecomp}),
such that $a_2(z_i)>0$ and $z_i \in K$,
is a minimizer of (\ref{eq:RatOpt}).
\end{theorem}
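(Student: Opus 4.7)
The plan is to extract from the flat-truncation decomposition \reff{eq:ydecomp} the information that every atom $z_i$ with $a_2(z_i)>0$ already attains the optimal value $a^{(k)}$ of the moment relaxation, and then to identify $a^{(k)}$ with the optimal value of \reff{eq:RatOpt}. This is the familiar extraction argument for polynomial optimization, but the presence of the denominator $a_2$ and the possibility that $a_2$ vanishes at some point of the larger set $K_1$ require some extra care.

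First I would use \reff{eq:ydecomp} to rewrite the two linear functionals of $y^*$ as atomic sums. Since $\deg(a_1),\deg(a_2)\le d\le 2t$, both $\langle a_2,y^*\rangle=\sum_i\mu_i a_2(z_i)=1$ and $\langle a_1,y^*\rangle=\sum_i\mu_i a_1(z_i)=a^{(k)}$ hold. The atomic measure $\nu\coloneqq\sum_i\mu_i\delta_{z_i}$ is supported on $K_1$, so its truncation to degree $d$ lies in $\mathscr{R}_d(K_1)$ and is feasible for \reff{eq:R(C)} with objective value $a^{(k)}$. Combined with the reverse inequality coming from the fact that \reff{eq:Momrat} is a relaxation of \reff{eq:R(C)}, this forces the optimal value of \reff{eq:R(C)} to be exactly $a^{(k)}$.

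Second, I would establish the pointwise inequality $a_1(x)-a^{(k)}a_2(x)\ge 0$ on all of $K_1$. For $x\in K_1$ with $a_2(x)>0$ the scaled point mass $\delta_x/a_2(x)$ is feasible for \reff{eq:R(C)} and has objective value $A(x)$, so $A(x)\ge a^{(k)}$. For $x\in K_1$ with $a_2(x)=0$ I would use the perturbation $y^{(m)}\coloneqq y^*+m[x]_{2k}$. Because $x\in K_1$, the tms $[x]_{2k}$ satisfies all equality and positive-semidefinite constraints of \reff{eq:Momrat}, and $\langle a_2,y^{(m)}\rangle=1+m\cdot a_2(x)=1$; if $a_1(x)<0$ then $\langle a_1,y^{(m)}\rangle=a^{(k)}+m\cdot a_1(x)\to-\infty$, contradicting the optimality of $y^*$. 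Hence $a_1(x)\ge 0=a^{(k)}a_2(x)$ on $\{a_2=0\}\cap K_1$ as well. This second step is the main subtlety: it is where the hypothesis $a_2\ge 0$ on the closed set $K_1$ is essential, and where the relaxation of the strict inequalities of $K$ to weak inequalities in $K_1$ becomes crucial.

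Finally, I would combine the two ingredients. Subtracting $a^{(k)}\cdot\sum_i\mu_i a_2(z_i)=a^{(k)}$ from $\sum_i\mu_i a_1(z_i)=a^{(k)}$ gives
\[
\sum_i\mu_i\bigl(a_1(z_i)-a^{(k)}a_2(z_i)\bigr)=0,
\]
and since each summand is nonnegative by the second step and each $\mu_i>0$, every summand must vanish. For any index $i$ with $a_2(z_i)>0$ this reads $A(z_i)=a^{(k)}$, and if additionally $z_i\in K$, then $z_i$ is feasible for \reff{eq:RatOpt}. Since $K\subseteq K_1$ and $A\ge a^{(k)}$ on $K\cap\{a_2>0\}$, the optimal value of \reff{eq:RatOpt} equals $a^{(k)}$ and $z_i$ attains it, proving that $z_i$ is a minimizer of \reff{eq:RatOpt}.
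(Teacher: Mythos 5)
Your proof is correct, and it follows the same overall strategy as the paper's: extract the atoms $z_1,\ldots,z_r\in K_1$ from the flat truncation, show that every atom with $a_2(z_i)>0$ attains the relaxation value $a^{(k)}$, and then pass from $K_1$ to $K$. The middle step, however, is organized differently. The paper stays entirely inside the convex hull of the atoms: it splits the index set into $J_1=\{j:\,a_2(z_j)>0\}$ and $J_2=\{j:\,a_2(z_j)=0\}$ and redistributes nonnegative weights $\nu_j$ among the $[z_j]_{2k}$ to force $A(z_j)=a^{(k)}$ on $J_1$, then compares with the optimal value $a^*$ of \reff{ratopt:K1}. You instead prove the pointwise certificate $a_1(x)-a^{(k)}a_2(x)\ge 0$ on all of $K_1$ (single-point measures where $a_2(x)>0$, the ray perturbation $y^*+m[x]_{2k}$ where $a_2(x)=0$) and finish by complementary slackness applied to the atomic decomposition. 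The two routes are equivalent in substance, but yours is slightly more explicit at the delicate point: the paper's one-line claim that ``the optimality of $y^*$ implies $A(z_j)=a^{(k)}$ for all $j\in J_1$'' tacitly requires ruling out $\sum_{j\in J_2}\mu_j a_1(z_j)<0$, which needs precisely the unboundedness argument you carry out with the ray $y^*+m[x]_{2k}$; your write-up supplies that detail. Conversely, the paper's route quantifies only over the finitely many atoms rather than over all of $K_1$, which is marginally more economical. Both arguments correctly use the hypothesis $a_2\ge 0$ on $K_1$ and the standing assumption $a_2>0$ on $K$ in the final identification of the optimal value of \reff{eq:RatOpt} with $a^{(k)}$.
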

\begin{proof}
Under the rank condition (\ref{eq:flatranklow}),
the decomposition (\ref{eq:ydecomp}) holds for some points
$z_1, \ldots, z_r \in K_1$ (see \cite{HenLas05,nie2013certifying}).
The constraint $\langle a_2, y^* \rangle = 1$ implies that
\[
1 = \langle a_2, y^* \rangle = \mu_1 a_2(z_1) + \cdots + \mu_r a_2(z_r) .
\]
Since $a_2 \ge 0$ on $K_1$, we know all $a_2(z_j) \ge  0$.
Let $J_1 \coloneqq \{ j: a_2(z_j) > 0  \}$ and
$J_2 \coloneqq \{ j: a_2(z_j) = 0  \}$, then
\[
\langle a_1, y^* \rangle =
\sum_{ j \in J_1  } \mu_j a_2(z_j)  A(z_j) +
\sum_{ j \in J_2  } \mu_j a_1(z_j)  .
\]
Note that $\sum_{ j \in J_1  } \mu_j a_2(z_j)  = 1$
and each $[z_j]_{2k} \in \mathscr{R}_{2k}(K_1)$.
For all nonnegative scalars $\nu_j \ge 0$, $j \in J_1 \cup J_2$
such that $\sum_{ j\in J_1} \nu_j a_2(z_j) = 1$, the tms
\[
z(\nu) \coloneqq  \nu_1[z_1]_{2k}+\cdots+\nu_r[z_r]_{2k}
\]
is a feasible point for the moment relaxation \reff{eq:Momrat}.
Therefore, the optimality of $y^*$ implies that
$A(z_j) =  a^{(k)}$ for all $j \in J_1.$
Since $a^{(k)} \le a^*$ and each $z_j \in K_1$,
we have $A(z_j) \ge a^*$. Hence,
$A(z_j) = a^*$ for all $j \in J_1.$
Note that \reff{eq:R(C)} is a relaxation of \reff{ratopt:K1}.
So each $z_j$ ($j \in J_1$) is a minimizer of \reff{ratopt:K1}.
Therefore, every $z_i \in K$ satisfying $a_2(z_i)>0$
is a minimizer of (\ref{eq:RatOpt}).
\end{proof}

In the decomposition (\ref{eq:ydecomp}), it is possible that
no $z_i$ belongs to the set $K$.
This is because the feasible set $K$ may not be closed,
due to strict inequality constraints.
For such a case, the optimal value of (\ref{eq:RatOpt})
may not be achievable.
If we obtain a minimizer $y^*$ of \reff{eq:Momrat}
such that $\rank M_k[y^*]$ is maximum
and (\ref{eq:flatranklow}) is satisfied,
then we can get all minimizers of \reff{ratopt:K1}.
Moreover, if \reff{ratopt:K1} has infinitely many minimizers,
the rank condition (\ref{eq:flatranklow}) cannot be satisfied easily.
We refer to \cite{Lau09,nie2013certifying} for this fact.
When primal-dual interior point methods are used to solve \reff{eq:Momrat},
a minimizer $y^*$ with $\rank \, M_k[y^*]$
maximum is often returned.
Therefore, if \reff{ratopt:K1} has finitely many minimizers
and primal-dual interior point methods are used,
then some points $z_i$ \reff{eq:ydecomp} must belong to the set $K$.
This means that we can typically find all minimizers of
\reff{eq:RatOpt} and \reff{ratopt:K1},
even if there are strict inequality constraints.
However, if the optimal value of (\ref{eq:RatOpt}) is not achievable,
then no $z_i$ in \reff{eq:ydecomp} belongs to $K$.
We refer to \cite{GloPol3,JdeK06,NDG08} for the work on
solving rational optimization problems.

\subsection{The optimization for all players}
\label{sc:opt4all}

The rational optimization problem
in Step~2 of Algorithm \ref{ag:KKTSDP} is
\begin{equation}
\label{ratopt:forall}
\left\{
\baray{rl}
\min & \theta(x) \coloneqq [x]_1^T\Theta[x]_1\\
 \st\quad & x\in\mathscr{U},
\earay
\right.
\end{equation}
where $\Theta$ is a generic positive definite matrix.
The feasible set $\mathscr{U}$ can be expressed as in the form \reff{rat:feasC},
with polynomial equalities and weak/strict inequalities,
for some polynomial sets $\Psi_0,\Psi_1,\Psi_2$.
That is, \reff{ratopt:forall} can be expressed in the form of \reff{eq:RatOpt},
with {denominators} being $1$.
Denote the corresponding set
\begin{equation}
\label{def:mathscrK_1}
\mathscr{U}_1 = \{x\in\mathbb{R}^n|\, p(x)=0\,(p\in\Psi_0),\,
q(x)\ge 0\,(q\in\Psi_1 \cup \Psi_2)\}.
\end{equation}
Since $\Theta$ is positive definite,
the objective $\theta$ is coercive and strictly convex.
When $\Theta$ is also generic,
the function $\theta$ has a unique minimizer $u^*$ on the set $\mathscr{U}_1$
if it is nonempty. Suppose $y^*$ is a minimizer of
the $k$th order moment relaxation of (\ref{ratopt:forall}).
Then, in Algorithm \ref{ag:ratopt},
the rank condition (\ref{eq:flatranklow}) is reduced to
\[
\Rank{M_t[y^*]} = 1
\]
for some order $t \in [d_0, k]$
and the decomposition (\ref{eq:ydecomp})
is equivalent to $y^*|_{2t} = \mu_1 [z_1]_{2t}$
for some $z_1 \in \mathscr{U}_1$.
Algorithm~\ref{ag:ratopt} can be applied to solve (\ref{ratopt:forall}).
The following are some special properties of
Moment-SOS relaxations for (\ref{ratopt:forall}).

\begin{theorem}
\label{thm:ratforall}
Assume $\Theta$ is a generic positive definite matrix.
\begin{enumerate}

\item[i)]
If the set $\mathscr{U}_1$ is empty and
$\idl[\Psi_0]+\qmod[\Psi_1 \cup \Psi_2]$ is archimedean,
then the moment relaxation for (\ref{ratopt:forall})
must be infeasible when the order $k$ is big enough.

\item[ii)]
Suppose $\mathscr{U}_1 \ne \emptyset$ and
$\idl[\Psi_0]+\qmod[\Psi_1 \cup \Psi_2]$ is archimedean.
Let $u^{(k)} \coloneqq  (y_{e_1}^{(k)},\ldots, y_{e_n}^{(k)})$,
where $y^{(k)}$ is the minimizer of the $k$th order moment relaxation
 of (\ref{ratopt:forall}). Then $u^{(k)}$ converges to the unique minimizer of
$\theta$ on $\mathscr{U}_1$.
	
\item[iii)]
Suppose the real zero set of $\Psi_0$ is finite.
If $\mathscr{U}_1 \ne \emptyset$, then we must have
$\Rank{M_t[y^*]} = 1$ for some $t \in [d_0, k]$,
when $k$ is sufficiently large.
\end{enumerate}
\end{theorem}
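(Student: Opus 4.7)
The plan is to prove the three parts in order, using Putinar's Positivstellensatz for (i), a compactness/measure-theoretic argument for (ii), and finite-variety convergence plus flat-extension theory for (iii).

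\textbf{Part (i).} Vacuously, the constant polynomial $-1$ is positive on the empty set $\mathscr{U}_1$. Since $\idl[\Psi_0]+\qmod[\Psi_1 \cup \Psi_2]$ is archimedean, Putinar's Positivstellensatz gives
\[
-1 \,\in\, \idl[\Psi_0]+\qmod[\Psi_1 \cup \Psi_2].
\]
Reading off the maximum degree in such a representation, I obtain $k_0$ with $-1\in \idl[\Psi_0]_{2k_0}+\qmod[\Psi_1\cup\Psi_2]_{2k_0}$. For any feasible $y$ of \reff{eq:Momrat} at order $k\ge k_0$, the localizing and moment matrix conditions force $\langle -1,y\rangle\ge 0$; but $\langle -1,y\rangle = -y_0 = -\langle a_2,y\rangle = -1$, a contradiction. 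Hence \reff{eq:Momrat} is infeasible for all $k\ge k_0$.

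\textbf{Part (ii).} Since $\Theta\succ 0$ is generic, $\theta(x)=[x]_1^T\Theta[x]_1$ is strictly convex and coercive, and the closed nonempty set $\mathscr{U}_1$ therefore admits a unique minimizer $u^*$. By the standard Moment-SOS convergence theorem under the archimedean hypothesis (as cited after \reff{eq:ydecomp}), the relaxation values satisfy $a^{(k)}\to \theta(u^*)$. To upgrade this to first-moment convergence, I would use that archimedean yields a bound $R-\|x\|^2\in\idl[\Psi_0]+\qmod[\Psi_1\cup\Psi_2]$, which combined with $M_k[y^{(k)}]\succeq 0$ and the associated localizing matrix inequality gives uniform bounds on every entry of $y^{(k)}$. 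Extracting a convergent subsequence (in every fixed degree) and diagonalizing, one obtains a tms $y^{\infty}$ satisfying all moment and localizing conditions at every order. By Putinar's $K$-moment theorem, $y^{\infty}$ is the moment sequence of a Borel probability measure $\mu$ on $\mathscr{U}_1$ with $\int \theta\, d\mu = \theta(u^*)$. Strict convexity of $\theta$ then forces $\mu = \delta_{u^*}$, so the subsequence of first moments tends to $u^*$. Since every convergent subsequence has the same limit $u^*$, the full sequence $u^{(k)}$ converges to $u^*$.

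\textbf{Part (iii).} If the real zero set of $\Psi_0$ is finite then $\mathscr{U}_1$ is a finite subset of it, say $\mathscr{U}_1=\{w_1,\ldots,w_N\}$, and generic $\Theta$ makes the minimizer $u^*$ unique. I would invoke the finite convergence theorem of the Moment-SOS hierarchy on finite real varieties (Laurent; see also \cite{nie2013certifying}): for $k$ sufficiently large the moment relaxation is exact, $a^{(k)}=\theta(u^*)$, and any minimizer $y^*$ admits a flat truncation. By the Curto-Fialkow extraction, such $y^*$ is the truncated moment sequence of a finitely atomic measure supported on $\mathscr{U}_1$. Optimality of $y^*$ forces this measure to be supported on the minimizers of $\theta$ on $\mathscr{U}_1$, which is the single point $u^*$. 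Consequently $y^*|_{2t} = [u^*]_{2t}$ at the flat truncation order $t$, and $\rank\, M_t[y^*]=1$.

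The main obstacle I anticipate is part (iii): one must ensure that the particular minimizer $y^*$ produced by the relaxation (not just some minimizer) attains rank $1$ at a truncation order $t\in[d_0,k]$. The cleanest route is to combine the finite convergence result on finite varieties with the fact that under uniqueness of $u^*$ the only probability measure on $\mathscr{U}_1$ minimizing $\int \theta\, d\mu$ is $\delta_{u^*}$, so every minimizer of the moment relaxation at high order must be $[u^*]_{2k}$ itself. Making this precise requires invoking flat-extension statements carefully, since the moment relaxation might a priori have multiple optima.
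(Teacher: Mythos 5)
Your proposal is correct and follows essentially the same route as the paper: Putinar's Positivstellensatz applied to $-1$ for part (i), the standard archimedean asymptotic-convergence argument (which the paper delegates to \cite{nie2013certifying} and \cite{Swg05}) for part (ii), and the finite-real-variety flat-truncation result (which the paper cites as \cite[Proposition~4.6]{lasserre2008semidef}) combined with uniqueness of the minimizer for part (iii). The only cosmetic difference is in part (i), where you derive infeasibility of the moment relaxation directly from a feasible $y$, while the paper phrases the same fact as unboundedness of the dual SOS relaxation; your fleshed-out details for (ii) and (iii) are consistent with the results the paper invokes by citation.
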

\begin{proof}
i) When $\mathscr{U}_1 = \emptyset$,
the constant $-1$ can be viewed as a positive polynomial on $\mathscr{U}_1$.
Since $\idl[\Psi_0]+\qmod[\Psi_1 \cup \Psi_2]$ is archimedean,
we have $-1\in \idl[\Psi_0]_{2k}+\qmod[\Psi_1 \cup \Psi_2]_{2k}$ for $k$ big enough,
by Putinar's Positivstellensatz. For such $k$,
the corresponding SOS relaxation~\reff{eq:SOSrat} is unbounded from above,
and hence the corresponding moment relaxation must be infeasible.

ii) When $\mathscr{U}_1 \ne \emptyset$,
the objective $\theta$ has a unique minimizer $u^*$ on $\mathscr{U}_1$.
The convergence of $u^{(k)}$ is implied by
\cite[Theorem~3.3]{nie2013certifying} (also see \cite{Swg05}).

iii) When the real zero set of $\Psi_0$ is finite
and $\mathscr{U}_1 \ne \emptyset$,
the conclusion can be implied by
\cite[Proposition~4.6]{lasserre2008semidef}
(also see \cite{Lau09}).
\end{proof}

\subsection{Checking Generalized Nash Equilibria}
\label{ssc:GNE}

Once we get a minimizer $u$ of (\ref{ratopt:forall}),
we need to check if it is a GNE or not. For each $i=1,\ldots,N$,
we need to solve the rational optimization problem
\begin{equation}
\label{rat:checkopt}
\left\{ \baray{rl}
\delta_i \coloneqq \min & f_i(x_i,u_{-i})-f_i(u_i, u_{-i}) \\
\st  & x_i \in X_i(u_{-i}),
\earay \right.
\end{equation}
where $f_i,X_i(u_{-i})$ are given in (\ref{eq:GNEP}).
Assume the KKT conditions hold and the Lagrange multiplies
can be expressed as in \reff{eq:as:ratexp},
then (\ref{rat:checkopt}) is equivalent to
\begin{equation}
\label{rat:checkopt:tight}
\left\{
\baray{rl}
\min   & f_i(x_i,u_{-i})-f_i(u_i,u_{-i})\\
\st & \nabla_{x_i}f_i(x_i,u_{-i})  \,=
\sum\limits_{j \in  \mc{I}^{(i)}_0 \cup  \mc{I}^{(i)}_1  }
    \tau_{i,j}(x_i,u_{-i})\nabla_{x_i} g_{i,j}(x_i,u_{-i}),  \\
& \tau_{i,j}(x_i,u_{-i})  g_{i, j}(x_i,u_{-i}) = 0, \,
\tau_{i,j}(x_i,u_{-i})\ge 0 \,(j\in\mc{I}^{(i)}_1), \\
&  x_i \in X_i(u_{-i}) .
\earay
\right.
\end{equation}
We can equivalently express the feasible set of
\reff{rat:checkopt:tight} in the form
\be \label{def:G_i}
Y_i(u_{-i})=\left\{x_i\in\mathbb{R}^{n_i}\left|
\begin{array}{l}
 p(x_i) = 0\,(p\in \Psi_{i,0}),\\
 q(x_i) \ge 0\,(q\in \Psi_{i,1}),\\
 q(x_i) >0\, (q\in \Psi_{i,2})
\end{array}
\right.
\right\},
\ee
for three sets $\Psi_{i,0}, \Psi_{i,1}, \Psi_{i,2}$ of polynomials in $x_i$.
In computational practice, we need to assume \reff{rat:checkopt:tight}
is solvable, i.e., the solution set of \reff{rat:checkopt:tight} is nonempty.
As in the Subsection~\ref{sc:intoRO},
we can apply a similar version of Algorithm \ref{ag:ratopt}
to solve the rational optimization problem (\ref{rat:checkopt:tight}).
Similar conclusions like in Theorem~\ref{thm:ratforall}
hold for the corresponding Moment-SOS relaxations.
A difference is that all rational functions for \reff{rat:checkopt}
are only in the variable $x_i$ instead of $x$.
It may have several different minimizers,
so the rank in \reff{eq:flatranklow} may be bigger than one.
Generally, the optimization \reff{rat:checkopt:tight}
is easier to solve than \reff{ratopt:forall}.

\section{Numerical experiments}
\label{sc:ne}

This section gives numerical experiments for Algorithm~\ref{ag:KKTSDP}
to solve rGNEPs. The rational optimization problems
are solved by Moment-SOS relaxations,
which are implemented with the software {\tt GloptiPoly~3} \cite{GloPol3}.
The semidefinite programs for the Moment-SOS relaxations
are solved by {\tt SeDuMi} \cite{sturm1999using}.
The computation is implemented in MATLAB R2018a,
in a Laptop with CPU 8th Generation Intel® Core™ i5-8250U and RAM 16 GB.
For neatness of the paper, only four decimal digits are
displayed for computational results.
The accuracy for a point $u$ to be a GNE is measured by the quantity
\begin{equation}
\label{eq:dtacc}
\dt \, \coloneqq \, \min \{\dt_1, \ldots, \dt_N \},
\end{equation}
where $\dt_i$ is the optimal value of \reff{eq:checkopt:alg}.
The point $u$ is a GNE if and only if $\dt = 0$.
Due to numerical issues, $u$ can be viewed as a GNE
if $\dt$ is nearly zero (e.g., $\delta \ge -10^{-6}$).
For cleanness of presentation,
we do not list the constraining functions $g_{i,j}$ explicitly.
Instead, they are ordered row by row, from top to bottom;
in each row, they are ordered from left to right.
If there is an inequality like $a(x) \le b(x)$,
then the corresponding constraining function is $b(x)-a(x)$.

To implement Algorithm~\ref{ag:KKTSDP}, we need rational LMEs.
This is reviewed in Subsection~\ref{sc:LME}.
More details can be found in \cite{Nie2019}.
For some standard constraints (e.g., box, simplex or balls),
we can have LMEs explicitly given as follows.

\begin{itemize}
\item[i)] Consider the box constraints
$
a(x_{-i})\le x_i\le b(x_{-i}),
$
where $a=(a_1,\ldots,a_{n_i})$, $b=(b_1,\ldots,b_{n_i})$.
The LME is, for $j=1,\ldots, n_i$,
\be\begin{array}{c}\label{eq:boxLME}
\lambda_{i,2j-1}=\frac{b_j(\xmi)-x_{i,j}}{b_j(\xmi)-a_j(\xmi)}\cdot\frac{\partial f_i}{\partial x_{i,j}},\quad
\lambda_{i,2j}=\frac{x_{i,j}-a_j(\xmi)}{b_j(\xmi)-a_j(\xmi)}\cdot\frac{\partial f_i}{\partial x_{i,j}}.
\end{array}
\ee

\item[ii)] Consider the simplex constraints
$u(\xmi) \ge e^Tx_i$, $x_i \ge l(x_{-i})$,
where $l$ is a vector function in $x_{-i}$.
The LME is $\lambda_i=(\lambda_{i,1},\,\hat{\lambda}_i)$ with
\be\label{eq:simplexLME}
\begin{array}{l}
\lambda_{i,1}=-\frac{(x_i- l(x_{-i}) )^T\nabla_{x_i} f_i}{u(\xmi)-e^Tl(x_{-i})},\quad
\hat{\lambda}_{i}=\nabla_{x_i}f_i+\lambda_{i,1}\cdot e.
\end{array}
\ee

\item[iii)] Consider the ball type constraint
$r(x_{-i})\le \Vert x_i-c\Vert^2\le R(x_{-i}),$
where $c=(c_1,\dots,c_{n_i})$ is a constant vector. The LME is
\be\label{eq:annularLME}
\lambda_{i}=\mbox{\small
$\left(
\frac{R(\xmi)\frac{\partial f_i}{\partial x_{i,1}}-(x_{i,1}-c_1)\cdot(x_i-c)^T\nabla_{\xpi}f_i}{2(x_{i,1}-c_1)(R(\xmi)-r(\xmi))},\,
\frac{r(\xmi)\frac{\partial f_i}{\partial x_{i,1}}-(x_{i,1}-c_1)\cdot(x_i-c)^T\nabla_{\xpi}f_i}{2(x_{i,1}-c_1)(R(\xmi)-r(\xmi))}
\right).$
}
\ee
For the special case that $r(\xmi)=0$,
the LME is reduced to
\be\label{eq:ballLME}
\lambda_{i}=(c-x_i)^T\nabla_{\xpi}f_i/\big(2R(\xmi)\big).
\ee
\end{itemize}

\subsection{Some fractional quadratic GNEPs}
First, we consider rGNEPs with fractional quadratic objectives
and standard constraints (e.g., box, simplex or balls).
These GNEPs often appear in various applications.
We give details for applying Algorithm~\ref{ag:KKTSDP} in such problems.

\begin{example}\label{ep:quad1}\rm
(i) Consider the 2-player rGNEP
\be\label{eq:quad1}
\begin{array}{cllcl}
\min\limits_{x_{1} \in \re^2 }& \frac{-(x_{1,1})^2-x_{2,1}x_{1,1}}{x_{1,2}x_{2,2}+1} &\vline&
\min\limits_{x_{2} \in \re^2 }& \frac{3x_{2,1}x_{2,2}-2 x_{2,2}}{x_{1,2}x_{2,2}+1} \\
\st & (x_{2,1})^2-x_1^Tx_1\ge 0,&\vline&
\st & 0.5\le x_{2,1}\le 1,\\
    & &\vline && 0\le x_{2,2}\le x_{1,1}.\\
\end{array}
\ee
The LME for the first player is in form of \reff{eq:ballLME},
and the LMEs for the second player are given as in \reff{eq:boxLME}.
Precisely, we have
$\lambda_1 = \frac{-x_1^T\nabla_{x_1}f_1}{2(x_{2,1})^2}$ and
\[
\lambda_2 \quad = \quad \Big( (2-2x_{2,1})\frac{\partial f_2}{\partial x_{2,1}}, \quad
(2x_{2,1}-1)\frac{\partial f_2}{\partial x_{2,1}}, \quad
\frac{x_{1,1}-x_{2,2}}{x_{1,1}}\frac{\partial f_2}{\partial x_{2,2}}, \quad
\frac{x_{2,2}}{x_{1,1}}\frac{\partial f_2}{\partial x_{2,2}} \Big).
\]
By applying Algorithm \ref{ag:KKTSDP}, we get
\[
\begin{array}{l|l}
\hline
k = 0 & u^{(0)}_1=u^{(0)}_2=(0.6667, 0000),\\
& \delta_1 = -3.6732\cdot 10^{-7},\quad
  \delta_2 = -0.3333 ,\\
&  v_2^{(0)} = (0.5000,0.6667),\quad p_2^{(0)}(x) = (0.5, x_{1,1}). \\ \hline
k = 1 & u^{(1)}_1=(0.4930, -0.0835),\, u^{(1)}_2=(0.5000, 0.4930),\\
& \delta_1 = -4.3101\cdot 10^{-7},\quad \delta_2 = -8.9324\cdot10^{-9} \\
& \mbox{A GNE is returned, in $4.22$ seconds.} \\ \hline
\end{array}
\]
In the above, $u^{(k)} = (u_1^{(k)}, u_2^{(k)})$, $v_i^{(k)}$
denote the minimizers of
\reff{eq:KKTwithpolyext}-\reff{eq:checkopt:alg} in the $k$th loop.
The $p_2^{(0)}(x)$ is the feasible extension of $v_2^{(0)}$ at $u^{(0)}$,
which is given as in \reff{eq:boxext}.
Interestingly, \reff{eq:quad1} has infinitely many non-GNE KKT points.
Because one can check that $(t,0,t,0)\in \mc{K}\setminus\mc{S}$
for every $t\in[\frac{2}{3},1]$.
However, Algorithm~\ref{ag:KKTSDP} still has finite convergence,
as verified in computational practice.
It implies that the upper bound $|\mc{K}\setminus \mc{S}|$
given in Theorem~\ref{thm:finitK} is not sharp.
In addition, we would like to remark that finite convergence
is guaranteed by the use of feasible extension $p_2(x) = (0.5, x_{1,1})$. Since
\[
f_2(x_1, p_2(x)) - f_2(t,0,t,0) = -0.5t<0,\quad \forall t\in [2/3, \, 1],
\]
then the whole set $\{(t,0,t,0):t\in [\frac{2}{3},1] \}$
can be precluded by (\ref{ineq:f(p)-f(x)}).

\noindent (ii) For the GNEP (\ref{eq:quad1}),
if the first player's objective function is changed to
\[\frac{-(x_{1,1})^2+x_{2,1}x_{1,1}}{x_{1,2}x_{2,2}+1},\]
then Algorithm~\ref{ag:KKTSDP} produces the following computational results:
\[
\begin{array}{l|l}
\hline
k = 0 & u^{(0)}_1=(0.3333,\, -0.3049),\,
   u^{(0)}_2=(0.6667,\, 0.0000),\\
& \delta_1 = -1.0000 ,\quad \delta_2 = -0.1856  ,\\
&  v_1^{(0)} = (-0.6667, \, 0.0000), \quad  p_1^{(0)}(x) = (-x_{2,1}, \, 0), \\
& v_2^{(0)} = (0.5000,\, 0.3333), \quad  p_2^{(0)}(x) = (0.5, \, x_{1,1}).  \\ \hline
k = 1 & \mbox{Nonexistence of GNEs is detected, in 5.56 seconds.}  \\ \hline
\end{array}
\]
Similar to (i), there are infinitely many non-GNE KKT points, which are
$\left(\alpha,\beta,2\alpha,0\right)$ with
\[
\alpha \in [1/3, \, 1/2  ], \,
\beta  \in [-\sqrt{3}\alpha, \, \sqrt{3} \alpha ].
\]
However, Algorithm~\ref{ag:KKTSDP} successfully
detected nonexistence of GNEs at the loop $k=1$.
\end{example}

\begin{example}\rm
\label{ep:jointsimp}
Consider the rGNEP with jointly simplex constraints
\be\label{eq:jointsimp}
\begin{array}{cllcl}
\min\limits_{x_{1} \in \re^{n_1} }& \frac{x^TA_{1}x+x^Ta_{1}+c_{1}}{x^TB_{1}x+x^Tb_{1}+d_{1}} &\vline&
\min\limits_{x_{2} \in \re^{n_2} }& \frac{x^TA_{2}x+x^Ta_{2}+c_{2}}{x^TB_{2}x+x^Tb_{2}+d_{2}} \\
\st & x_1\in \Delta_1(x_2),&\vline&
\st & x_2\in \Delta_2(x_1).\\
\end{array}
\ee
In the above, for each $i=1,2$,
$A_{i},B_{i}\in\re^{n\times n}$, $a_{i},b_{i}\in\re^n$, $c_{i},d_{i}\in \re$,
and
\[\Delta_i(\xmi)\,\coloneqq\, \left\{x_i\in\re^{n_i} : 1-e^Tx\ge0,\,
x_{i,1}\ge0 \ddd x_{i,n_i}\ge0  \right\}.\]
For both players, we use LMEs as given in (\ref{eq:simplexLME}),
of which denominators have zeros in the feasible set $X=\{x\in\re^n_+: 1-e^Tx\ge0\}$.
Precisely, they vanish when $e^T\xmi = 1,\, i=1,2$.
Moreover, the set of complex KKT points for \reff{eq:jointsimp} has a positive dimension
(see \cite{Nie2022degree}) for all $A_i, B_i, a_i, b_i,c_i$ and $d_i$.
Indeed, for all $t\in[0,1]$,
the pair of $x_1 =(0,t,0\ddd 0)$ and $x_2=(1-t,0\ddd 0)$ is a complex KKT point
because the active constraint gradients
$e, e_2, e_3\ddd e_n$ span the entire space.

For instance, let $n_1=n_2=2$ and
\[
A_1=\lvt\begin{array}{rrrr}
  3 & 2 & -1 & 3\\
  2 & 0 & -2 & 0\\
 -1 &-2 &  0 &-2\\
  3 & 0 & -2 & 2
\end{array}\rvt,\quad
A_2=\lvt\begin{array}{rrrr}
    -1 & 2 & 0 & 0\\
    2 & -2 & 3 & 1\\
    0 & 3 & -4 & 2\\
    0 & 1 & 2 & 2
\end{array}\rvt,
\]
\[
B_1=\lvt\begin{array}{rrrr}
     4 & 0 & 2 & -2\\
     0 & 2 & 0 & -1\\
     2 & 0 & 3 & -1\\
     -2&-1 &-1 & 2
\end{array}\rvt,\quad
B_2=\lvt\begin{array}{rrrr}
     3 & 1 &-1 & 3\\
     1 & 2 &-1 & 2\\
     -1&-1 & 2 & 0\\
     3 & 2 & 0 & 4
\end{array}\rvt,
\]
\[
a_1=\lvt\begin{array}{rrrr}
     1\\1\\-1\\0
\end{array}\rvt,\quad
a_2=\lvt\begin{array}{rrrr}
     -1\\0.5\\1\\-1
\end{array}\rvt,\quad
b_1=\lvt\begin{array}{rrrr}
     0\\-1\\1\\0
\end{array}\rvt,\quad
b_2=\lvt\begin{array}{rrrr}
     1\\0\\-0.5\\1
\end{array}\rvt,
\]
\[
c_1=3,\quad
c_2=-2,\quad
d_1=3.5,\quad
d_2=3.
\]
By a symbolic computation, one can check that the pair of $x_1=(0,t)$ and $x_2=(1-t,0)$
is a KKT point for all $t\in[0,\beta]$,
where $\beta\approx 0.4831$ is the unique real zero of $\beta^3+\frac{1}{3}\beta^2+\frac{97}{48}\beta-\frac{7}{6}=0$.
Apply Algorithm~\ref{ag:KKTSDP} to the GNEP \reff{eq:jointsimp}.
The computational results are displayed in Table~\ref{tab:jointsimp}.
In the $k$th loop, the $u^{(k)} = (u^{(k)}_1,u^{(k)}_2)$
denotes the minimizer of \reff{eq:KKTwithpolyext},
and $\delta^{(k)}$ is the accuracy for $u^{(k)}$ computed as in \reff{eq:dtacc}.
Each feasible extension is selected in form of \reff{eq:simpext}.
\begin{table}[htb]
\centering
\caption{Numerical results for Example~\ref{ep:jointsimp} }
\label{tab:jointsimp}
\begin{tabular}{|c|c|c|c|c|c|c|c|c|c|r|}  \hline
$k$ & \multicolumn{1}{c|}{$(u_1^{(k)}, u_2^{(k)})$}     & $\delta^{(k)}$ \\ \hline
0   & $\quad(0.0000,0.5000),\, (0.0000,0.0000)\quad$  & $-0.1429$  \\ \hline
1   & $(0.0000,0.0000),\, (0.0000,0.0354)$  & $-0.4425$  \\ \hline
2   & $(0.0000,0.4831),\, (0.5169,0.0000)$  & $-0.2476$ \\ \hline
3   & $(0.2910,0.1089),\, (0.6001,0.0000)$  & $-0.0583$ \\ \hline
4   & $(0.0000,0.2742),\, (0.7258,0.0000)$  & $-1.14\cdot10^{-7}$ \\ \hline
\end{tabular}
\end{table}
We get a GNE at the loop $k=4$ with $\delta=-1.14\cdot10^{-7}$.
It took around 16.81 seconds.
\end{example}

\subsection{Some explicit examples}

In the following, we present some explicit examples of rGNEPs.
For cleanness of the paper, we only report computational results
at the last loop for Algorithm~\ref{ag:KKTSDP}.

\begin{example}\rm
\label{ep:firstepinsc4}
(i) Consider the GNEP in (\ref{eq:firsteprtn}).
The LME for the first player is
\[
\lambda_1  \, = \, \Big( \frac{x_{1,2} x_1^T\nabla_{x_1}f_1}{2x_{2,1}}, \, 0, \, 0 \Big) .
 \]
For the second player, the LME is given by (\ref{eq:simplexLME}).
Each LME has a positive denominator on $X$.
Algorithm~\ref{ag:KKTSDP} terminated at the initial loop $k=0$.
The computed GNE is $u = (u_1, u_2)$ with
\begin{equation}
\label{eq:first:gne}
u_1 = (1.3561,0.7374),\quad u_2 = (1.0000,1.0468),\quad \delta=-3.44\cdot10^{-8}.
\end{equation}
It took around 8.36 seconds.

Consider its equivalent polynomials reformulation (\ref{eq:polyreform}).
For the first player, the LME is
\[
\lambda_1 \, = \, (\frac{x_{1,2}}{x_{2,1}}\frac{\pt f_1}{\pt x_{1,2}},\,
0, \, 0, \, x_{1,3}\cdot \frac{\pt f_1}{\pt x_{1,3}}) .
\]
For the second player, the LME is
\[\begin{array}{l}
\lmd_{2,1}=\left(\frac{\pt f_2}{\pt x_{2,1}}-(x_{2,3})^2\frac{\pt f_2}{\pt x_{2,3}}\right)\frac{1-x_{2,1}}{x_{1,1}+x_{1,2}-1}+
\frac{\pt f_2}{\pt x_{2,2}}\frac{1-x_{2,2}}{x_{1,1}+x_{1,2}-1},\\
\lmd_{2,2}=\left(\frac{\pt f_2}{\pt x_{2,1}}-(x_{2,3})^2\frac{\pt f_2}{\pt x_{2,3}}\right)+\lmd_{2,1},\quad
\lmd_{2,3}=\frac{\pt f_2}{\pt x_{2,2}}+\lmd_{2,1},\quad
\lmd_{2,4}=x_{2,3}\cdot \frac{\pt f_2}{\pt x_{2,3}}.
\end{array}\]
Each LME has a positive denominator on $X$.
Algorithm~\ref{ag:KKTSDP} also terminated at the initial loop $k=0$.
The computed GNE is $\hat{u} = (\hat{u}_1, \hat{u}_2)$ with
\[\hat{u}_1 = (1.3561,0.7374,0.7374),\quad \hat{u}_2 = (1.0000,1.0468,1.0000),
\quad \delta=-2.70\cdot10^{-8}\]
The result is consistent with that in \reff{eq:first:gne}.
But the computation took around 264.42 seconds.
It is much more efficient to solve the original rational GNEP.

\noindent
(ii) For the GNEP in (\ref{eq:firsteprtn}),
if objective functions are changed to
\be\label{eq:1stchange}
f_1(x) = \frac{(x_{1,2})^2+x_{1,1}x_{1,2} (e^Tx_2)}{x_{1,1}},\quad
f_2(x) = \frac{(x_{2,2})^2-x_{2,1}x_{2,2} (e^Tx_1)}{x_{2,1}},
\ee
then there is no GNE.
This was detected by Algorithm~\ref{ag:KKTSDP}
at the initial loop $k=0$.
It took about $5.47$ seconds.

Like in (i), we also consider the equivalent polynomial GNEP
with the updated objective.
By applying Algorithm~\ref{ag:KKTSDP}, we detected the
nonexistence of GNEs at the initial loop $k=0$.
It took around 19.61 seconds.

\noindent (iii)
Consider the GNEP in Example~\ref{eq:simpsimplex}.
We use the LMEs as in (\ref{eq:simplexLME})
and the feasible extension as in (\ref{eq:simpext}).
By Algorithm~\ref{ag:KKTSDP},
we got the GNE $u = (u_1, u_2)$ at the loop $k=1$ with
\[
u_1 = (0.0000,0.5000),\quad u_2 = (0.0000,0.5000),\quad \delta=-4.47\cdot10^{-8}.
\]
It took around $3.28$ seconds.

\noindent (iv)
Consider the GNEP in Example~\ref{eq:infKKT}.
We use the LMEs as in (\ref{eq:degenLME}).
Since for each $i$, the feasible set $X_i(\xmi)$ is independent to $\xmi$,
we apply the trivial feasible extension $p_i(x)=x_i$.
By Algorithm~\ref{ag:KKTSDP},
we got the GNE $u = (u_1, u_2)$ in the initial loop with
\[
u_1 = (0.0000,0.0000),\quad u_2 = (1.0000,1.0000),\quad \delta=-5.45\cdot10^{-9}.
\]
It took around $2.03$ seconds.

\noindent (v)
Consider the GNEP in Example~\ref{ep:SDPRP:yalmip}.
For the first player's optimization,
we have the rational LMEs:
\[
\begin{array}{ll}
\lambda_{1,1}=\frac{x_{2,2}-x_{1,1}}{x_{2,2}(2x_{2,1}-x_{1,2})}\cdot \frac{\partial f_1}{\partial x_{1,1}}, &
\lambda_{1,2}=\frac{x_{1,2}x_{2,2}-2x_{1,1}x_{2,1}}{x_{2,1}x_{2,2}(2x_{2,1}-x_{1,2})}\cdot\frac{\partial f_1}{\partial x_{1,1}},\\
\lambda_{1,3}= \frac{2-x_{1,2}x_{2,2}}{3x_{2,2}}\left(
\frac{\partial f_1}{\partial x_{1,2}}+\frac{x_{2,2}-x_{1,1}}{2x_{2,1}-x_{1,2}}\cdot\frac{\partial f_1}{\partial x_{1,1}}
\right), &  \lambda_{1,4}=\frac{1-2x_{1,2}x_{2,2}}{2-x_{1,2}x_{2,2}}\lambda_{1,3}.
\end{array}\]
For the second player's optimization,
we have the rational LMEs:
\[
\begin{array}{ll}
\lambda_{2,1} = \frac{1-x_{2,2}}{2x_{2,1}-1}\cdot\frac{\partial f_2}{\partial x_{2,2}}, &
\lambda_{2,2} = \frac{1-2x_{2,1}x_{2,2}}{2x_{2,1}-1}\cdot\frac{\partial f_2}{\partial x_{2,2}},\\
\lambda_{2,3}=\frac{1}{2}(\lambda_{2,1}-x_{2,1}\frac{\partial f_2}{\partial x_{2,1}}), &
\lambda_{2,4}=\frac{1}{2}\left((2-x_{2,1})\cdot\frac{\partial f_2}{\partial x_{2,1}}+(1-4x_{2,2})\lambda_{2,1}\right).
\end{array}
\]
We apply the feasible extension as in Example~\ref{ep:SDPRP:yalmip}.
Algorithm~\ref{ag:KKTSDP} terminated at the loop $k=1$.
We got the GNE $u=(u_1,u_2)$ with
\[
u_1=(1.0000,0.5000),\quad u_2=(0.5000,1.0000),\quad \delta=-1.82\cdot10^{-8}.
\]
It took around $22.73$ seconds.
\end{example}

\begin{example} \rm
\label{ep:hyperbola}
Consider the $2$-player GNEP with the optimization
\[
\begin{array}{cllcl}
\min\limits_{x_{1} \in \re^3 }& x_1^T(x_1+x_2)+x_{1,1}-x_{1,2}-x_{1,3} &\vline&
\min\limits_{x_{2} \in \re^3 }&  e^Tx_2+\sum_{j=1}^3x_{1,j}(x_{2,j})^2 \\
\st & 1+ (e^Tx_2)^2-x_{1,1}x_{1,2}x_{1,3}\ge0,&\vline&
\st & (e^Tx_1)^2-x_2^Tx_2\ge 0.\\
\end{array}
\]
For the first player's optimization, we have the LME and the feasible extension
\[
\lambda_{1}=-\frac{x_1^T\nabla_{x_1}f_1}{3+ 3(e^Tx_2)^2},\quad
p_{1}(x)=
\left(v_{1,1},v_{1,2}, \frac{1+ (e^Tx_2)^2}{1+ (e^Tu_2)^2}\cdot v_{1,3}\right).
\]
For the second player, we have the LME as in
(\ref{eq:ballLME}) and the feasible extension as in (\ref{eq:annular}).
Algorithm~\ref{ag:KKTSDP} terminated at the loop $k=0$.
We got the GNE $u=(u_1,u_2)$ with
\[
\begin{array}{l}
u_1 = (0.3090,0.8090,0.8090),\quad
u_2 = (-1.6180,-0.6180,-0.6180),
\end{array}
\]
and the accuracy parameter $\dt = -2.77\cdot 10^{-8}$.
It took around $5.16$ seconds.
\end{example}

\begin{example} \rm
\label{ep:3playergame}
\noindent (i)
Consider the $3$-player GNEP
\[
\begin{array}{l}
\mbox{F}_1(x_2,x_3): \,
\left\{\begin{array}{cl}
\min\limits_{x_1\in\re^2} & \|x_1-\frac{1}{2}(x_2+x_3)\|^2\\
\st & x_{1,1}x_{1,2}-x_3^Tx_3-1 = 0,\, x_{1,1}\ge 0,\, x_{1,2}\ge 0,
\end{array}\right.\\[2pt]
\mbox{F}_2(x_1,x_3): \,
\left\{\begin{array}{cl}
\min\limits_{x_2\in\re^2} & x_2^T(x_1+x_3)+(x_{2,1})^3-3(x_{2,2})^2\\
\st & (x_{1,2})^2-\Vert x_{1,1}\cdot x_2\Vert^2 = 0,
\end{array}\right.\\[2pt]
\mbox{F}_3(x_1,x_2): \,
\left\{\begin{array}{cl}
\min\limits_{x_3\in\re^2} & x_3^T(x_1+x_2+x_3-e)\\
\st& x_1^Tx_1-e^Tx_3\ge 0,\ x_{3,1}-0.1\ge0,\ x_{3,2}-0.1\ge0.
\end{array}\right.
\end{array}
\]
The LMEs for $\mbox{F}_1(x_2,x_3)$ and $\mbox{F}_2(x_1,x_3)$ are
\[
\begin{array}{ll}
\lambda_{1,1}=\frac{x_1^T\nabla_{x_1}f_1}{2+2x_3^Tx_3}, &
\lambda_{1,2}=\frac{\partial f_1}{\partial x_{1,1}}-x_{1,2}\lambda_{1,1},\\
\lambda_{1,3}=\frac{\partial f_1}{\partial x_{1,2}}-x_{1,1}\lambda_{1,1}, &
\lambda_{2}= \frac{-x_2^T\nabla_{x_2}f_2}{2(x_{1,2})^2}.
\end{array}
\]
We use the LME as in (\ref{eq:simplexLME}) for $\mbox{F}_3(x_1,x_2)$.
The first two players have the feasible extension
\[
p_1(x)= \Big(v_{1,1},\frac{1+x_3^Tx_3}{v_{1,1}}\Big),\quad
p_2(x)=\frac{u_{1,1}x_{1,2}}{u_{1,2}x_{1,1}}\cdot (v_{2,1},v_{2,2}).
\]
For the third player, the feasible extension is given in (\ref{eq:simpext}).
Algorithm~\ref{ag:KKTSDP} terminated at the initial loop $k=0$.
We got the GNE $u = (u_1, u_2, u_3)$ with
\[
u_1 = (1.1401,1.0461), \quad  u_2 =  (-0.1743,-0.9009), \quad
u_3 = (0.1000,0.4274)
\]
and $\dt = -6.19\cdot 10^{-8}$. It took around $10.58$ seconds.

\noindent (ii)
It is interesting to note that if the third player's objective is changed to
\[
x_3^T(x_1+x_2-e)+(x_{3,1})^2-(x_{3,2})^2,
\]
then there is no GNE. This was detected by
Algorithm~\ref{ag:KKTSDP} at the loop $k=1$.
It took around $19.16$ seconds.
\end{example}

We remark that Algorithm~\ref{ag:KKTSDP} can be generalized
to compute more (or even all) GNEs.
This can be done with the approach in \cite{Nie2020nash}.
Suppose a GNE $u$ is already known.
Select a small scalar $\zeta >0$
and solve the maximization problem
\begin{equation}
\label{eq:max}
\left\{
\begin{array}{rl}
\rho \coloneqq  \max  & [x]_1^T\Theta[x]_1\\
 \st &  x\in \mathscr{U},\,
  [x]_1^T\Theta[x]_1\le [u]_1^T\Theta[u]_1+\zeta.
\end{array}
\right.
\end{equation}
If $\rho> [u]_1^T\Theta[u]_1$,
then let $\zeta \coloneqq \zeta/2$ and solve (\ref{eq:max}) again.
Repeat this until $\zeta$ is small enough to make $\rho = [u]_1^T\Theta[u]_1$.
When $u$ is an isolated KKT point and $\Theta$
is generically positive definite, such $\zeta$ always exists.
This can be proved similarly to that in \cite{Nie2020nash}.
Once such $\zeta$ is found, we add the new inequality
$[x]_1^T\Theta[x]_1\ge [u]_1^T\Theta[u]_1+\zeta$
to (\ref{eq:KKTwithpolyext}). Then Algorithm~\ref{ag:KKTSDP}
can be applied to get a new GNE, if it exists.
It is worth noting that if the optimization
(\ref{eq:KKTwithpolyext}) is infeasible with the newly added constraints,
then there are no other GNEs.
By repeating this process, we can get all GNEs if there are finitely many ones.
We refer to \cite{Nie2020nash} for more details.
The following is such an example.

\begin{example} \rm  \label{ep:rings}
Consider the $2$-player GNEP
\[
\begin{array}{cllcl}
    \min\limits_{x_{1} \in \re^2 }& \frac{x_{2,2}(x_{1,1})^2+
                 x_{2,1} (x_{1,2})^2+x_{1,1}x_{1,2}}{(x_{1,1})^2+1} &\vline&
    \min\limits_{x_{2} \in \re^2 }& \frac{x_{1,2}(x_{2,1})^2+
                 x_{1,1}(x_{2,2})^2+x_{2,1}x_{2,2}}{(x_{2,1})^2+1} \\
    \st & (1-e^Tx_2)^2\le\Vert x_1\Vert^2\le 1 ,&\vline&
    \st &(1-e^Tx_1)^2\le\Vert x_2\Vert^2\le 1.\\
\end{array}
\]
We use the LMEs as in (\ref{eq:annularLME}).
For both $i=1,2$, the feasible extension is
\[
p_i(x)=\frac{v_i}{\Vert v_i\Vert} - \left(\frac{v_i}{\Vert v_i\Vert}-v_i\right)
\frac{e^Tx_{-i}}{e^Tu_{-i}}.
\]
Following the above process, we got two GNEs $u = (u_1, u_2)$ with
\[
\begin{array}{l}
u_1 = (0.9250,-0.3799),\quad u_2 = (0.9250,-0.3799),\quad
        \dt = -9.06\cdot10^{-8},\text{ and} \\
u_1 = (-0.2700,0.9629),\quad u_2 = (-0.2700,0.9629),\quad
     \dt = -2.67\cdot10^{-7}. \\
\end{array}
\]
It took around $29.80$ seconds to get both of them.
Since each rational LME has a positive denominator on $X$,
we obtained all GNEs for this problem.
\end{example}

\subsection{Some examples in applications}

We give some examples arising from applications.
The first one is an NEP with rational objectives.

\begin{example}\rm
\label{ep:electricity}
Consider the NEP for the electricity market problem \cite{Contreras2004,FacKan10}.
Suppose there are $N$ generating companies.
For each $i\in[N]$, the $i$th company possesses $n_i$ generating units,
where the $j$th generating unit has $x_{i,j}$ power generation.
Assume each $x_{i,j}\ge 0$ and is bounded by the maximum capacity $E_{i,j}\ge 0$.
Denote $\varphi_i = (\varphi_{i,1},\ldots, \varphi_{i,n_i})$,
where each $\varphi_{i,j}$ is the cost of the generating unit $x_{i,j}$:
\[
\varphi_{i,j}(x) \, \coloneqq \,
a_{i,j} \cdot (x_{i,j})^3-b_{i,j}\cdot (x_{i,j})^2+c_{i,j}x_{i,j}.
\]
The electricity price is given by $\phi(x) \,  \coloneqq  \, \frac{B}{A+e^Tx}.$
The aim of each company is to maximize its profits.
The $i$th player's optimization problem is
\[
\baray{l}
\mbox{F}_i(x_{-i}):\,
\left\{
    \begin{array}{rl}
    \min  & e^T\varphi_i(x)-
    \phi(x)\cdot e^Tx_i\\
\st  & x_{i,j}\ge0,\quad  E_{i,j}-x_{i,j}\ge0\,(j\in[n_i]).
    \end{array}\right.
\earay
\]
The objectives are rational functions in strategies.
The LME in (\ref{eq:boxLME}) is applicable with box constraints.
Since this is an NEP, we can apply the trivial feasible extension
$p_i(x)=x_i$ for each $i\in[N]$.
We choose the following parameters:
\be \nn
\begin{array}{llllll}  \hline
N=3, & n_1=1,& n_2 = 2, & n_3 = 3, & A=0.5,& B=20,\\
a_{1,1}=0.7, & a_{2,1}=0.75, & a_{2,2}=0.65, & a_{3,1}=0.66, & a_{3,2}=0.7, & a_{3,3}=0.8, \\
b_{1,1}=0.8, & b_{2,1}=0.75, & b_{2,2}=0.65, & b_{3,1}=0.66, & b_{3,2}=0.95,& b_{3,3}=0.5, \\
c_{1,1}=2,   & c_{2,1}=1.25, & c_{2,2}=1,    & c_{3,1}=2.25, & c_{3,2}=3,   & c_{3,3}=3,  \\
E_{1,1}=2,   & E_{2,1}=2.5,  & E_{2,2}=1.5, & E_{3,1}=1.2,  & E_{3,2}=1.8, & E_{3,3}=1.6 .\\
\hline
\end{array}
\ee
Algorithm~\ref{ag:KKTSDP} terminated at the loop $k=0$.
We got the GNE $u = (u_1, u_2, u_3)$, where
\[
u_1 = 1.1432, \quad u_2 = (1.0549,1.1771), \quad
u_3 = (0.8917,0.6439,0.0000),
\]
and $\dt = -1.70\cdot 10^{-8}$.
It took about $7.98$ seconds.
\end{example}

\begin{example}\rm
\label{ep:internet}
Consider the GNEP for internet switching
\cite{facchinei2009generalized,Kesselman2005}.
Assume there are $N$ users, and the maximum capacity of the buffer is $B$.
Let $x_i$ denote the amount of $i$th user's ``packets" in the buffer,
which has a positive lower bound $L_i$.
Suppose the buffer is managed with \textit{``drop-tail" policy}:
if the buffer is full, further packets will be lost and resent.
Suppose $\frac{x_i}{e^Tx}$ is the transmission rate of the $i$th user,
$\frac{e^Tx}{B}$ is the congestion level of the buffer,
and $1-\frac{e^Tx}{B}$ measures the decrease
in the utility of the $i$th user as the congestion level increases.
The $i$th user's optimization problem is
\begin{equation}
\label{eq:internetrtn}
\left\{ \begin{array}{cl}
   \min\limits_{x_i \in \re^1 } &
    f_i(x)=-\frac{x_i}{e^Tx}(1-\frac{e^Tx}{B}) \\
 \st & x_i- L_i \ge 0, \, B-e^Tx \ge 0.
\end{array} \right.
\end{equation}
We apply the LME as in (\ref{eq:simplexLME})
and solve the GNEP for $N=10,\dots,14$,
with parameters $B=2.5$ and $L_i=0.09+0.01i$ for each $i\in[N]$.
Algorithm \ref{ag:KKTSDP} terminated at the initial loop $k=0$ for each case.
The numerical results are shown in Table~\ref{tab:internet}.
In the table, $u=(u_1, \ldots, u_N)$ and $\delta$
denote respectively the GNE and the accuracy parameter,
and ``time'' is the CPU time in seconds.
\begin{table}[htb]
\centering
\caption{Numerical results of Example~\ref{ep:internet} }
\label{tab:internet}
\begin{tabular}{|c|c|c|c|c|c|c|c|c|c|r|}  \hline
$N$ & \multicolumn{1}{c|}{$u=(u_1,\ldots,u_N)$}     & $\delta$ & time \\ \hline
10   & $\ \, u_i = 0.2250\ (i=1,\ldots,10)$  & $-1.05\cdot10^{-9}$ & 11.16 \\ \hline
11   & $\ \, u_i = 0.2066\ (i=1,\dots,11)$  & $-4.75\cdot10^{-9}$ & 24.36 \\ \hline
12   & $u_i = \left\{\begin{array}{ll}0.1883 & (i=1,\ldots,9)\\
      L_i & (i=10,\ldots,12)\end{array}\right.$  & $-1.93\cdot10^{-8}$ & 45.38 \\ \hline
13   & $u_i = \left\{\begin{array}{ll}0.1647 & (i=1,\ldots,7)\\
      L_i & (i=8,\ldots,13)\end{array}\right.$  & $-4.83\cdot10^{-8}$ & 70.81 \\ \hline
14   & $u_i = \left\{\begin{array}{ll}0.1282 & (i=1,2,3)\\
      L_i & (i=4,\dots,14)\end{array}\right.$  & $-1.02\cdot10^{-7}$ & 97.00 \\ \hline
\end{tabular}
\end{table}
\end{example}

\subsection{Comparison with other methods}
\label{ssc:compare}

We compare our method (i.e., Algorithm~\ref{ag:KKTSDP})
with some existing methods for solving GNEPs, such as
the interior point method (IPM)
based on the KKT system \cite{dreves2011solution},
the quasi-variational inequality method (QVI) in \cite{Han2012},
the Augmented-Lagrangian method (ALM) in \cite{kanzow2016},
and the Gauss-Seidel method (GSM) in \cite{Nie2020gs}.
For Example~\ref{ep:rings}, we only compare for finding one GNE.
For Example~\ref{ep:internet}, we compare for $N=10$.

For a computed tuple $u \coloneqq (u_1, \ldots, u_N)$, we use the quantity
\[
\kappa \coloneqq \max \Bigg
 \{\max_{i\in[N],  j\in \mc{I}^{(i)}_1 \cup  \mc{I}^{(i)}_2 } \{-g_{i,j}(u)\},
\max_{i\in[N],j\in\mc{I}^{(i)}_0 }\{|g_{i,j}(u)|\} \Bigg\}
\]
to measure the feasibility violation.
Note that $u$ is feasible if and only if $\kappa \le 0$ and
$g_{i,j}(u)>0$ for every $j \in \mc{I}^{(i)}_2$.
For these methods, we use the following stopping criterion:
for each generated iterate $u$,
if its feasibility violation $\kappa <10^{-6}$,
then we compute the accuracy parameter $\dt$ for verifying GNEs.
If $\delta > -10^{-6}$, then we stop the iteration.

For the above methods, the parameters are the same as in
\cite{dreves2011solution,kanzow2016,Nie2020gs}.
The full penalization is used for the Augmented-Lagrangian method,
and a Levenberg-Marquardt type method (see \cite[Algorithm~24]{kanzow2016})
is used to solve penalized subproblems.
For the Gauss-Seidel method,
the normalization parameters are updated as (4.3) in \cite{Nie2020gs},
and the Moment-SOS relaxations are used to
solve each player's optimization problems.
For the QVI method, the Moment-SOS relaxations are used to compute projections.
We let $1000$ be the maximum number of iterations
for all the above methods.
For initial points, we use $(0,1,1,0)$ for Examples~\ref{ep:quad1}(i)-(ii),
$(1,1,1,1)$ for Examples~\ref{ep:firstepinsc4}(i)(ii)(iv)(v),
$(\sqrt{2},\sqrt{2},1,1,1,1)$ for Example~\ref{ep:3playergame},
$(0,1,0,1)$ for Example~\ref{ep:rings},
$0.25\cdot (1,\cdots,1)$ for Example~\ref{ep:internet},
and the zero vectors for other examples.
If the maximum number of iterations is reached
but the stopping criterion is not met,
we still solve (\ref{eq:checkopt:alg}) to check if
the latest iterating point is a GNE or not.
For the QVI, the produced sequence
is said to converge if the projection residue is sufficiently small.
For the ALM and IPM, the produced sequence is
considered to converge if the last iterate satisfies the KKT
conditions up to a small round-off error (say, $10^{-6}$).
The numerical results are shown in Table~\ref{tab:comparison}.
The ``$u$" column lists the most recent update by each method,
``time" gives the {total CPU time (in seconds)},
and the ``{$\max\{|\delta|,\kappa\}$}"
measures the feasibility violation and the accuracy of being GNEs.
For all methods in the table,
if the produced sequence is convergent, but the quantity
$\max\{|\delta|,\kappa\}$ is not close to zero (e.g., $\le 10^{-6}$),
then the method converges to a KKT point that is not a GNE.

\begin{longtable}{|l|c|c|c|c|c|c|c|r|c|}
\caption{Comparison with some existing methods}\label{tab:comparison}\\
   \hline
   Algorithm    &   $u$   &  time    &  {$\max\{|\delta|,\kappa\}$}  \\ \hline
   \endfirsthead
   \multicolumn{4}{|c|}{\bf Example~\ref{ep:quad1}(i)} \\ \hline
   ALM                 &  \multicolumn{3}{c|}{not convergent }\\ \hline
   IPM                 &  \multicolumn{3}{c|}{not convergent } \\ \hline
   QVI                 &  (0.8911,-0.0000,0.8910,0.0000) & $298.10$ &  $0.22$ \\ \hline
   GSM                 &  (0.4930,-0.0835,0.5000,0.4930) & $3.12$  & $1.33\cdot10^{-8}$ \\ \hline
   Alg.~\ref{ag:KKTSDP}  & (0.4930,-0.0835,0.5000,0.4930)  & $4.22$  & $4.31\cdot10^{-7}$ \\ \hline
   \multicolumn{4}{|c|}{\bf Example~\ref{ep:quad1}(ii)} \\ \hline
   ALM                 &  (0.5000,0.8660,1.0000,0.0000) & $63.81$ &  $2.25$\\ \hline
   IPM                 &  \multicolumn{3}{c|}{not convergent } \\ \hline
   QVI                 &  \multicolumn{3}{c|}{not convergent } \\ \hline
   GSM                 &  \multicolumn{3}{c|}{not convergent } \\ \hline
   Alg.~\ref{ag:KKTSDP}  & nonexistence of GNEs detected  & $5.56$ &  \\ \hline
   \multicolumn{4}{|c|}{\bf Example~\ref{ep:jointsimp}} \\ \hline
   ALM                 &  (0.0000,0.1931,0.2889,0.0000) & 47.51 & $0.21$\\ \hline
   IPM                 &  (0.0000,0.1931,0.2889,0.0000) & 17.00 &  $0.21$ \\ \hline
   QVI                 &  (0.0000,0.0000,0.0000,0.0354) & 441.52 &  $0.44$ \\ \hline
   GSM                 &  (0.0000,0.0000,1.0000,0.0000) & 0.59  & $8.08\cdot10^{-8}$ \\ \hline
   Alg.~\ref{ag:KKTSDP}  & (0.0000,0.2742,0.7258,0.0000)  & 16.81  & $1.14\cdot10^{-7}$ \\ \hline

   \multicolumn{4}{|c|}{\bf Example~\ref{ep:firstepinsc4}(i)} \\ \hline
   ALM      		   & \multicolumn{3}{c|}{not convergent } \\ \hline
   IPM      		   &  (1.3561,0.7374,1.0000,1.0468) & 2.39 &  $1.93\cdot 10^{-7}$ \\ \hline
   QVI                 &  (1.3562,0.7375,1.0000,1.0469) & 2753.26 &  $1.34\cdot 10^{-4}$ \\ \hline
   GSM      		   & (1.3558,0.7376,1.0000,1.0466)          & 3.47  & $2.60\cdot10^{-9}$ \\ \hline
   Alg.~\ref{ag:KKTSDP}  & (1.3561,0.7374,1.0000,1.0468)    	   & 8.36  & $3.44\cdot10^{-8}$           \\ \hline

   \multicolumn{4}{|c|}{\bf Example~\ref{ep:firstepinsc4}(ii)} \\ \hline
   ALM      		   & \multicolumn{3}{c|}{not convergent} \\ \hline
   IPM      		   & \multicolumn{3}{c|}{not convergent} \\ \hline
   QVI                 & \multicolumn{3}{c|}{not convergent} \\ \hline
   GSM      		   & \multicolumn{3}{c|}{not convergent} \\ \hline
   Algorithm~3.3  & nonexistence of GNEs detected    	   & 5.47  &             \\ \hline

  \multicolumn{4}{|c|}{\bf Example~\ref{ep:firstepinsc4} (iii)} \\ \hline
   ALM      		   & (0,0,0,0) & 49.34 & $1.00$ \\ \hline
   IPM      		   & (0.2808,0.2192,0.2808,0.2192) & 12.98 & $0.16$ \\ \hline
   QVI                 & (0.0000,0.4999,0.0001,0.4999) & 616.29 & $5.35\cdot 10^{-5}$ \\ \hline
   GSM     		   & (0.0000,0.4995,0.0000,0.4995) & 110.79 & $8.58\cdot 10^{-4}$  \\ \hline
   Alg.~\ref{ag:KKTSDP}  & (0.0000,0.5000,0.0000,0.5000)    	   & 3.28  &  $4.47\cdot 10^{-8}$           \\ \hline

    \multicolumn{4}{|c|}{\bf Example~\ref{ep:firstepinsc4}(iv)} \\ \hline
   ALM                 & \multicolumn{3}{c|}{not convergent} \\ \hline
   IPM                 & \multicolumn{3}{c|}{not convergent} \\ \hline
   QVI                 & \multicolumn{3}{c|}{not convergent} \\ \hline
   GSM                 & \multicolumn{3}{c|}{not convergent} \\ \hline
   Alg.~\ref{ag:KKTSDP}  & (0.0000,0.0000,1.0000,1.0000)           & $2.03$  &  $5.45\cdot10^{-9}$ \\ \hline

   \multicolumn{4}{|c|}{\bf Example~\ref{ep:firstepinsc4}(v)} \\ \hline
   ALM      		   & \multicolumn{3}{c|}{not convergent} \\ \hline
   IPM      		   & \multicolumn{3}{c|}{not convergent} \\ \hline
   QVI                 & (1.0000,0.5000,0.5000,1.0000) & 490.93 & $9.51\cdot10^{-5}$ \\ \hline
   GSM      		   & (1.0000,0.5000,0.5000,1.0000) & 1.80 & $2.31\cdot10^{-10}$ \\ \hline
   Alg.~\ref{ag:KKTSDP}  & (1.0000,0.5000,0.5000,1.0000)    	   & 22.73  &  $1.82\cdot10^{-8}$ \\ \hline

   \multicolumn{4}{|c|}{\bf Example~\ref{ep:hyperbola}} \\ \hline
   ALM      		   & \multicolumn{3}{c|}{not convergent} \\ \hline
   IPM      		   & \multicolumn{3}{c|}{not convergent} \\ \hline
   QVI                 & $\begin{array}{l}(0.3094,0.8090,0.8090,\\\qquad-1.6172,-0.6180,-0.6180) \end{array}$ & 21.46  &  $5.63\cdot 10^{-7}$           \\ \hline
   GSM     		   & \multicolumn{3}{c|}{not convergent} \\ \hline
   Alg.~\ref{ag:KKTSDP}  & $\begin{array}{l}(0.3090,0.8090,0.8090,\\\qquad-1.6180,-0.6180,-0.6180) \end{array}$ & 5.16  &  $2.77\cdot 10^{-8}$           \\ \hline

   \multicolumn{4}{|c|}{\bf Example~\ref{ep:3playergame}(i)} \\ \hline
   ALM      		   & $\begin{array}{l}(0.7774,1.3629,-0.2227,\\\qquad1.7389,0.2226,0.1000) \end{array}$ & 75.92  &  $5.10$ \\ \hline
   IPM      		   & $\begin{array}{l}(1.1401,1.0461,-0.1743,\\\qquad-0.9009,0.1000,0.4274) \end{array}$ & 0.86  &  $8.24\cdot 10^{-7}$\\ \hline
   QVI                 & $\begin{array}{l}(0.7775,1.3628,-0.2227,\\\qquad1.7386,0.2227,0.1000) \end{array}$ & 192.73  &  $5.10$\\ \hline
   GSM     		   & $\begin{array}{l}(1.1403,1.0463,-0.1743,\\\qquad-0.9009,0.1000,0.4273) \end{array}$ & 6.28  &  $1.88\cdot 10^{-8}$\\ \hline
   Alg.~\ref{ag:KKTSDP}  & $\begin{array}{l}(1.1401,1.0461,-0.1743 \\ \qquad-0.9009,0.1000,0.4274) \end{array}$ & 10.58  &  $6.19\cdot 10^{-8}$           \\ \hline

   \multicolumn{4}{|c|}{\bf Example~\ref{ep:3playergame}(ii)} \\ \hline
   ALM      		   & \multicolumn{3}{c|}{not convergent} \\ \hline
   IPM      		   & \multicolumn{3}{c|}{not convergent} \\ \hline
   QVI                 & \multicolumn{3}{c|}{not convergent} \\ \hline
   GSM     		   & \multicolumn{3}{c|}{not convergent} \\ \hline
   Alg.~\ref{ag:KKTSDP}  & nonexistence of GNEs detected    	   & 19.16  &             \\ \hline

   \multicolumn{4}{|c|}{\bf Example~\ref{ep:rings}} \\ \hline
   ALM               &\multicolumn{3}{c|}{not convergent}           \\ \hline
   IPM               & $(0.2665, 0.3184, 0.2665, 0.3184)$ & 11.22  &  $0.27$  \\ \hline
   QVI                 & \multicolumn{3}{c|}{not convergent} \\ \hline
   GSM                & \multicolumn{3}{c|}{not convergent} \\ \hline
   Alg.~\ref{ag:KKTSDP}  &  $(0.9250,-0.3799,0.9250,-0.3799)$ & 2.78  &$9.06\cdot 10^{-8}$           \\ \hline

   \multicolumn{4}{|c|}{\bf Example~\ref{ep:electricity}} \\ \hline
   ALM      		   & $\begin{array}{l} (1.1652,1.0601, 1.1822,\\\qquad 0.9952, 0.0577,0.2332) \end{array}$ & 94.36  &  $0.10$           \\ \hline
   IPM      		   & \multicolumn{3}{c|}{not convergent} \\ \hline
   QVI               & $\begin{array}{l} (1.1432,1.0549, 1.1770,\\\qquad 0.8916, 0.6440,0.0001) \end{array}$ & 523.06  &  $2.35\cdot 10^{-5}$           \\ \hline
   GSM      		   & $\begin{array}{l} (1.1446,1.0551, 1.1772,\\\qquad 0.8917, 0.6431,0.0000) \end{array}$ & 4.22  &  $9.16\cdot 10^{-7}$           \\ \hline
   Alg.~\ref{ag:KKTSDP}  & $\begin{array}{l}(1.1432,1.0549, 1.1771,\\\qquad 0.8917, 0.6439, 0.0000) \end{array}$ & 7.98  &  $1.70\cdot 10^{-8}$           \\ \hline

   \multicolumn{4}{|c|}{\bf Example~\ref{ep:internet}} \\ \hline
   ALM      		   & $\begin{array}{l} (0.2250,0.2250, 0.2250, 0.2250,\\\qquad 0.2250, 0.2250, 0.2250\\\qquad\qquad 0.2250, 0.2250, 0.2250) \end{array}$ & 3.06 &  $5.28\cdot 10^{-12}$           \\ \hline
   IPM      		   & $\begin{array}{l} (0.2245,0.2245, 0.2246, 0.2246,\\\qquad 0.2246, 0. 2246, 0.2247\\\qquad\qquad 0.2251, 0.2260, 0.2275) \end{array}$ & 10.89  &  $5.13\cdot 10^{-7}$  \\ \hline
   QVI                & $\begin{array}{l} (0.2254,0.2254, 0.2254, 0.2254,\\\qquad 0.2254, 0.2253, 0.2253\\\qquad\qquad 0.2253, 0.2252, 0.2251) \end{array}$ & 9.10  &  $4.59\cdot 10^{-7}$  \\ \hline
   GSM      		   & $\begin{array}{l} (0.2236,0.2250, 0.2262, 0.2270,\\\qquad 0.2271, 0.2266, 0.2256\\\qquad\qquad 0.2245, 0.2236, 0.2232) \end{array}$ & 21.33  &  $8.59\cdot 10^{-7}$  \\ \hline
   Alg.~\ref{ag:KKTSDP}   & $\begin{array}{l} (0.2250,0.2250, 0.2250, 0.2250,\\\qquad 0.2250, 0.2250, 0.2250\\\qquad\qquad 0.2250, 0.2250, 0.2250) \end{array}$ & 11.16 &  $1.05\cdot 10^{-9}$       \\ \hline
\end{longtable}

The comparisons are summarized as follows:
\bit

\item
The ALM failed to get a GNE for Examples~\ref{ep:firstepinsc4}(i),(ii),(iv),
\ref{ep:hyperbola} and \ref{ep:3playergame}(ii),
because the penalization subproblems could not be solved accurately.
It converged to non-GNE KKT points for Examples~\ref{ep:quad1}(ii), \ref{ep:jointsimp}, \ref{ep:firstepinsc4}(iii),
\ref{ep:3playergame}(i) and \ref{ep:electricity}.
It did not converge for Examples~\ref{ep:quad1}(i),
\ref{ep:firstepinsc4}(v) and \ref{ep:rings},
when the maximum penalty parameter $10^{12}$ was reached.

\item
The IPM failed to get a GNE for
Examples~\ref{ep:firstepinsc4}(iv), \ref{ep:hyperbola} and \ref{ep:3playergame}(ii),
because the step length was too small to
efficiently decrease the violation of KKT conditions.
It converged to non-GNE KKT points for Examples~\ref{ep:quad1}(i)-(ii),
\ref{ep:firstepinsc4}(iii) and \ref{ep:electricity}.
It did not converge for Examples~\ref{ep:quad1}(i)-(ii),
\ref{ep:firstepinsc4}(ii),(v) and \ref{ep:electricity},
because the Newton type directions
did not satisfy sufficient descent conditions.

\item
The QVI converged to non-GNE points for Examples~\ref{ep:quad1}(i), \ref{ep:jointsimp},~\ref{ep:3playergame}(i).
It did not converge for
Examples~\ref{ep:quad1}(ii), \ref{ep:firstepinsc4}(ii),(iv),
\ref{ep:3playergame}(ii) and \ref{ep:rings},
since the projection could not be computed successfully.

\item
The GSM failed to find a GNE for
Examples~\ref{ep:quad1}(ii),~\ref{ep:firstepinsc4}(ii),(iv),
\ref{ep:hyperbola}, \ref{ep:3playergame}(ii) and \ref{ep:rings},
because some sub-optimization problems could not be solved successfully.
It terminated at the maximum iteration number for
Example~\ref{ep:firstepinsc4}(iii), but did not meet
the stopping criterion.

\eit

\subsection{About strict inequality constraints}
\label{ssc:strict}

For rGNEPs, rational Lagrange multiplier expressions are used to get the KKT set.
For strict inequality constraints, their Lagrange multipliers are always zeros.
In Algorithm~\ref{ag:KKTSDP}, the set $\mc{K}$ is as in \reff{eq:KKTweak},
where the LMEs are zeros for strict inequalities.
For each rational optimization problem, its feasible set
is relaxed from \reff{rat:feasC} to \reff{def:K_1},
and then we solve it by Algorithm~\ref{ag:ratopt}.
Strict inequalities give open sets.
When there are finitely many KKT points (this is the generic case),
there does not exist a sequence of feasible KKT points
that converge to the boundary given by strict inequality constraints.
For some special cases, the KKT set may be infinite
and there possibly exists a sequence of feasible KKT points
converging to the boundary of strict inequality constraints.
If this case happens, the limit may not be a GNE.
The following is such an example.

\begin{example}\rm
\label{ep:boundary}
Consider the following GNEP
\be
\label{eq:boundary}
\begin{array}{rllrl}
    \min\limits_{x_{1} \in \re^1 }& x_1x_2&\vline&
    \min\limits_{x_{2} \in \re^1 }& \frac{-(x_2)^2}{1-(x_1)^2} \\
    \st &  x_1 \ge 0,\, 1-x_1 \ge 0, &\vline&
     \st & x_2\ge 0,\, 1-(x_1)^2-(x_2)^2 > 0.
\end{array}
\ee
The second player has a strict inequality constraint.
The Lagrange multiplier vectors can be expressed as
\[
\lambda_1 = ( x_2-x_1x_2, \quad  -x_1x_2 ),\qquad
\lambda_2 = \Big(\frac{-2x_2}{1-(x_1)^2}, \quad 0\Big).
\]
The denominators of $\lambda_2$ and the second player's objective
are positive in the feasible set, but not positive on
the boundary of its closure.
The KKT set $\mc{K}$ is
\[
\mc{K} =
\left\{ (x_1, x_2)
\left|\begin{array}{c}
  x_1(x_2-x_1x_2) =0, \,  -x_1x_2(1-x_1) = 0, \\
0\le x_1\le 1,\, x_2-x_1x_2\ge 0,\, -x_1x_2\ge 0,\\
x_2 \cdot \frac{-2x_2}{1-(x_1)^2} = 0, \\
x_2\ge 0,\, (x_1)^2+(x_2)^2<1,\, \frac{-2x_2}{1-(x_1)^2} \ge 0.
\end{array}\right.
\right \}.
\]
One can see that $\mc{K} = \{ 0 \le x_1 < 1, x_2 = 0 \}$.
After the cancellation for the denominator
and relaxing $(x_1)^2+(x_2)^2<1$ to the weak inequality
$(x_1)^2+(x_2)^2 \le 1$, the set $\mc{K}$ is changed to
\[
\mc{K}_1 =
\left\{ (x_1, x_2)
\left|\begin{array}{c}
  x_1(x_2-x_1x_2) =0, \,  -x_1x_2(1-x_1) = 0, \\
0\le x_1\le 1,\, x_2-x_1x_2\ge 0,\, -x_1x_2\ge 0,\\
x_2 \cdot (-2x_2) = 0, \\
x_2\ge 0,\, (x_1)^2+(x_2)^2 \le 1,\,  -2x_2  \ge 0.
\end{array}\right.
\right \}.
\]
Then one can check that $\mc{K}_1 = \{ 0 \le x_1 \le 1, x_2 = 0 \}$, i.e.,
\[
\mc{K}_1 = [0,1]\times \{0\},\quad\mbox{and}\quad
\mc{K}_1\setminus\mc{K} = \{(1,0)\}.
\]
When we apply the algorithm to compute GNEs.
We got the candidate
$\hat{x} = (1,0)$, which is not feasible for \reff{eq:boundary}
but lies on the boundary.
The second player's objective is not well defined at $\hat{x}$.
The candidate $\hat{x} = (1,0)$ is not a GNE.
Indeed, this GNEP does not have any GNE.
\end{example}

\section{Conclusions and Discussions}
\label{sc:conc}

This paper studies how to solve GNEPs given by rational functions.
Lagrange multiplier expressions and feasible extensions
are introduced to compute GNEs.
We propose a hierarchy of rational optimization problems
to solve GNEPs. This is given in Algorithm~\ref{ag:KKTSDP}.
The Moment-SOS relaxations are used to
solve the appearing rational optimization problems.
Under some general assumptions,
we show that Algorithm~\ref{ag:KKTSDP} can get a GNE if it exists
or detect its nonexistence.

The feasible extension is a major technique used in this paper.
Its purpose is to preclude KKT points that are not GNEs.
This technique was originally introduced for
solving bilevel optimization in the work \cite{Nie2020bilevel}.
However, their properties are quite different
for GNEPs and bilevel optimization.
For instance, a generic polynomial GNEP has finitely many KKT points,
which is implied by the recent work \cite[Theorem~3.1]{Nie2022degree}.
It guarantees the existence of feasible extensions for generic rGNEPs,
which is shown in Theorem~\ref{thm:finitK}.
So, Algorithm~\ref{ag:KKTSDP} has finite convergence for general cases.
However, for general polynomial bilevel optimization,
the KKT set (for the lower level optimization) is usually not finite.
There do not exist results on the existence of feasible extensions.
Moreover, the work \cite{Nie2020bilevel} only considers polynomial extensions.
In this paper, we consider more general feasible extensions
that are given by rational functions.
It greatly broadens the usage of feasible extensions for solving GNEPs.
For instance, we gave explicit rational feasible extensions in \reff{eq:annular}
for ball constraints parameterized by the polynomial $a_j(x_{-i})$.
For this kind of constraints, polynomial extensions
as in \cite{Nie2020bilevel} usually do not exist.

There exists much interesting future work to do with feasible extensions.
For instance, are there sufficient conditions
weaker than those in Theorem~\ref{thm:finitK}
for the existence of feasible extensions?
If they exist, how can we find them efficiently?
These questions are mostly open.

\medskip \noindent
{\bf Acknowledgement}
The first author is partially supported by the NSF grant
DMS-2110780.

\end{document}